\documentclass[11pt]{amsart}
\usepackage{amscd,graphicx,accents,url,xcolor}
\usepackage{atbegshi}
\AtBeginShipoutFirst{\vspace{-.25in}\small\color{red!50!black}\centerline{To appear in: \emph{Analysis and Numerics of Partial Differential Equations},}
\centerline{U.~Gianazza, F.~Brezzi, P.~Colli~Franzone, and G.~Gilardi, eds., Springer 2013.}}

\textheight=9in
\textwidth=6.5in
\headheight6.15pt 
\calclayout
\parskip=0pt plus 1pt 

\numberwithin{equation}{section}

\newcommand{\R}{\mathbb{R}}
\renewcommand\H{{\mathcal H}}
\newcommand\J{{\mathcal J}}
\renewcommand\P{{\mathcal P}}
\newcommand\Q{{\mathcal Q}}
\renewcommand\S{{\mathcal S}}
\newcommand\T{{\mathcal T}}
\newcommand\x{\times}
\newcommand\<{\langle}
\renewcommand\>{\rangle}
\newcommand\ldeg{\operatorname{ldeg}}
\newcommand\sdeg{\operatorname{sdeg}}
\newtheorem{thm}{Theorem}[section] 
\newtheorem{lemma}[thm]{Lemma}

\newtheorem{cor}[thm]{Corollary}


\DeclareMathOperator{\sign}{sign}

\DeclareMathOperator{\grad}{grad}
\DeclareMathOperator{\curl}{curl}

\DeclareMathOperator{\tr}{tr}
\let\div\undefined
\DeclareMathOperator{\div}{div}

\newcommand\Alt{\operatorname{Alt}}

\newcommand\skw{\operatorname{skw}}
\newcommand\vol{\mathsf{vol}}
\newcommand\0{\ring}
\newcommand\Null{\operatorname{\mathcal N}}

\begin{document}
\title{Spaces of finite element differential forms}

\dedicatory{In memory of Enrico Magenes, in gratitude for his
deep and elegant mathematics,\\ which taught us, and his profound
humanity, which inspired us.}

\thanks{The work of the first author was supported by NSF grant DMS-1115291.}

\author{Douglas N. Arnold}

\address{School of Mathematics, University of Minnesota, Minneapolis, Minnesota 55455}

\email{arnold@umn.edu}
\urladdr{http://www.ima.umn.edu/\~{}arnold}

\keywords{finite element differential form, finite element exterior calculus}

\subjclass[2000]{Primary: 65N30}

\begin{abstract}
We discuss the construction of finite element spaces of differential forms
which satisfy the crucial assumptions of the finite element exterior calculus,
namely that they can be assembled into subcomplexes of the de~Rham complex
which admit commuting projections.  We present two families
of spaces in the case of simplicial meshes, and two other families
in the case of cubical meshes.  We make use of the exterior calculus and
the Koszul complex to define and understand the spaces.  These tools allow
us to treat a wide variety of situations, which are often treated separately,
in a unified fashion.
\end{abstract}

\maketitle

\section{Introduction}
The gradient, curl, and divergence are the most fundamental operators of
vector calculus, appearing throughout the differential equations of mathematical physics
and other applications.  The finite element solution of such equations
requires finite element subspaces of the
natural Hilbert space domains of these operators, namely $H^1$, $H(\curl)$, and $H(\div)$.
The construction of subspaces with desirable properties has been
an active research topic for half a century.  Exterior calculus provides a framework in
which these fundamental operators and spaces are unified and generalized, and their properties
and inter-relations clarified.  Each of the operators is viewed as a particular case
of the exterior derivative operator $d=d^k$ taking differential $k$-forms on
some domain $\Omega\subset\R^n$ to differential
$(k+1)$-forms.  We regard $d^k$ as an unbounded operator between the
Hilbert spaces $L^2\Lambda^k$ and $L^2\Lambda^{k+1}$ consisting of
differential forms with $L^2$ coefficients.
The domain of $d^k$ is the Hilbert space
\begin{equation}\label{hlambda}
H\Lambda^k = \{\, u\in L^2\Lambda^k\,|\, du\in L^2\Lambda^{k+1}\,\},
\end{equation}
and all the $d^k$ and their domains combine to form the $L^2$ de~Rham complex
$$
0 \to H\Lambda^0 \xrightarrow{d^0} H\Lambda^1 \xrightarrow{d^1} \cdots
\xrightarrow{d^{n-1}} H\Lambda^n \to 0.
$$
Differential $0$-forms and $n$-forms may be identified simply with functions
on $\Omega$ and differential $1$-forms and $(n-1)$-forms may be identified
with vector fields.  In three dimensions, we may use these proxies to write the de~Rham complex
as
$$
0 \to H^1 \xrightarrow{\grad} H(\curl) \xrightarrow{\curl} H(\div) \xrightarrow{\div} L^2 \to 0.
$$

The finite element exterior calculus (FEEC) is a theory developed in the last
decade \cite{ArnoldICM,acta,bulletin} which enables the development and analysis
of finite element spaces of differential forms.  One major part of FEEC is carried
out in the framework of Hilbert complexes, of which the $L^2$ de~Rham complex is
the most canonical example.  One important outcome of FEEC is the realization
that the finite dimensional subspaces
$\Lambda^k_h\subset H\Lambda^k$ used in Galerkin discretizations
of a variety of differential equations
involving differential $k$-forms should satisfy two basic assumptions, beyond
the obvious requirement that the spaces have good approximation properties.  The first
assumption is that the subspaces form a \emph{subcomplex} of the de~Rham complex,
i.e., that $d \Lambda^k_h\subset \Lambda^{k+1}_h$.  The second is that there exist
projection operators $\pi^k_h$ from $H\Lambda^k$ to $\Lambda^k_h$ which commute with $d$
in the sense that the following diagram commutes:
$$
\begin{CD}
H\Lambda^0 @>d>> H\Lambda^1 @>d>> \cdots @>d>> H\Lambda^{n-1\kern-1mm} @>d>> H\Lambda^n
\\
@V\pi^0_hVV  @V\pi^1_hVV  @. @V\pi^{n-1}_hVV  @V\pi^n_hVV
\\
\Lambda^0_h @>d>> \Lambda^1_h  @>d>> \cdots @>d>>  \Lambda^{n-1}_h  @>d>> \Lambda^n_h
\end{CD}
$$

The second major part of FEEC, into which the present exposition falls, is concerned
with the construction of specific finite element spaces $\Lambda^k_h$ of differential forms.
A special role is played by two families of finite element spaces $\P_r^-\Lambda^k(\T_h)$
and $\P_r\Lambda^k(\T_h)$, defined for any dimension $n$, any simplicial mesh $\T_h$,
any polynomial degree $r\ge1$,
and any form degree $0\le k\le n$.  Both these spaces are subspaces of $H\Lambda^k(\Omega)$.
The $\P_r^-\Lambda^k$ spaces with increasing $k$ and constant $r$ form a subcomplex
of $L^2$ de~Rham complex which admits commuting projections.  The same is true of
the $\P_r\Lambda^k$ family, except in that case the polynomial degree $r$ decreases
as the form degree $k$ increases.

We also discuss cubical meshes.  In this case, there is a well-known family of elements,
denoted by $\Q_r^-\Lambda^k$ in our notation, obtained by a tensor product construction.
As for the $\P_r^-\Lambda^k$ family, the $\Q_r^-\Lambda^k$ spaces with constant degree $r$
combine to form a de~Rham subcomplex with
commuting projections. 
We also discuss a recently discovered second family on cubical meshes, the $\S_r\Lambda^k$
family of \cite{cubicderham}.  Like the $\P_r\Lambda^k$ family, the de~Rham subcomplexes for this family
are obtained with decreasing degree.  Moreover for large $r$, the $\dim\S_r\Lambda^k(\T_h)$
is much smaller dimension than $\dim\Q_r^-\Lambda^k$. 
The finite element subspaces of $H^1$, $H(\curl)$, and $H(\div)$ from this family in three dimensions
are new.

The remainder of the paper is organized as follows.  In the next section we cover
some preliminary material (which the more expert reader may wish to skip).
We recall the construction of finite element spaces from spaces of shape functions
and unisolvent degrees of freedom.  To illustrate we discuss the Lagrange elements
and carry out the proof of unisolvence in a manner that will guide our treatment
of differential form spaces of higher degree.  We also give a brief summary of
those aspects of exterior calculus most relevant to us.  In section~\ref{sec:simp} we discuss the two
primary families of finite element spaces for differential forms on simplicial meshes
mentioned above.  A key role is played by the Koszul complex, which is introduced
in this section.  Then, in Theorem~\ref{unisolv}, we give a proof of unisolvence for the $\P_r^-$ family
which we believe to be simpler than has appeared heretofore
(a similar proof could be given for the $\P_r$ family as well).
In the final section we review the two families mentioned for cubical meshes, including
a description, without proofs, of the recently discovered $\S_r$ family.

\section{Preliminaries}
\subsection{The assembly of finite element spaces}
Recalling the definition of a finite element space \cite{ciarlet}, we assume that
the domain $\Omega\subset\R^n$ is triangulated by finite elements,
i.e., its closure is the union of a finite set $\T_h$ of closed convex polyhedral
elements with nonempty interiors such that the intersection of any two elements
is either empty or is a common face of each of some dimension.  We denote
by $\Delta_d(T)$ the set of faces of $T$ of dimensions $d$, so, for example,
$\Delta_0(T)$ is the set of vertices of $T$, and $\Delta_n(T)$ is the singleton
set whose only element is $T$.  We also define $\Delta(T)=\bigcup_{0\le d\le n}\Delta_d(T)$,
the set of all faces of $T$.
In this paper we consider the two cases
of simplicial elements, in which each element $T$ of the triangulation is an $n$-simplex,
and cubical elements, in which element is an $n$-box (i.e., the Cartesian product
of $n$ intervals).  To define a finite element space $\Lambda^k_h\subset H\Lambda^k(\Omega)$,
we must supply, for each element $T\in\T_h$,
\begin{enumerate}
 \item A finite dimensional space $V(T)$, called the space of \emph{shape
 functions}, consisting of differential $k$-forms on $T$ with polynomial coefficients.
 The finite element space will consist of functions $u$ which belong to the shape function spaces piecewise
 in the sense that $u|_T\in V(T)$ for all $T\in\T_h$
 (allowing the possibility that $u$ is multiply-valued
 on faces of dimension $<n$).
 \item A set of functionals $V(T)\to\R$, called the \emph{degrees of freedom}, which are \emph{unisolvent}
 (i.e., which form a basis for the dual space $V(T)^*$) and such that
 each degree of freedom is associated to a specific face of $f\in\Delta(T)$.
\end{enumerate}
It is assumed that when two distinct elements $T_1$ and $T_2$ intersect in a common face
$f$, the degrees of
freedom of $T_1$ and $T_2$ which are associated to $f$ are in a specific
1-to-1 correspondence.  If $u$ is a function which belongs to the shape function spaces
piecewise, then we say that the degrees of freedom are single-valued on $u$ if
whenever two elements $T_1\ne T_2$ meet in a common face, then the corresponding
degrees of freedom associated to the face take the same value on $u|_{T_1}$ and $u|_{T_2}$, respectively.
With these ingredients, the finite element space $\Lambda^k_h$ associated to
the choice of triangulation $\T_h$, the shape function spaces $V(T)$, and the degrees of
freedom, is defined as the set of all $k$-forms on $\Omega$ which belong to the shape
function spaces piecewise and for which all the degrees of freedom are single-valued.

The choice of the degrees of freedom associated to faces of dimension
$d<n$ determine the interelement continuity imposed on the finite element subspace.
The use of degrees of freedom to specify the continuity, rather than imposing
the continuity a priori in the definition of the finite element space, is of great practical significance in that
it assures that the finite element space can be implemented efficiently.  The
dimension of the space is known (it is just the sum over the faces of the triangulation of the number of
degrees of freedom associated to the face) and it depends only on the topology
of the triangulation, not on the coordinates
of the element vertices.  Moreover, the degrees of freedom lead to a computable
basis for $\Lambda^k_h$ in which each basis element is associated to one degree of freedom.
Further, the basis is local, in that the basis element for a degree of freedom associated
to a face $f$ is nonzero only on the elements that contain $f$.

The finite element space so defined does not depend on the specific choice
of degrees of freedom in $V(T)^*$, but only on the span of the degrees of freedom
associated to each face $f$ of $T$, and we shall generally specify only
the span, rather than a specific choice of basis for it.

\subsection{The Lagrange finite element family}
To illustrate these definitions and motivate the constructions
for differential forms, we consider the simplest
example, the Lagrange family of finite element subspaces of $H^1=H\Lambda^0$.  The Lagrange
space, which we denote $\P_r\Lambda^0(\T_h)$ in anticipation of its generalization below,
is defined for any simplicial triangulation $\T_h$ in $\R^n$ and any polynomial degree $r\ge 1$.
The shape function space is $V(T)=\P_r(T)$, the space of all polynomial functions on $T$ of
degree at most $r$.  For a face $f$ of $T$ of dimension $d$, the span of the associated
degrees of freedom are the functionals
\begin{equation}\label{lagdofs}
u\in\P_r(T) \mapsto \int_f (\tr_f u) q, \ q\in \P_{r-d-1}(f), \ f\in\Delta(T).
\end{equation}
In interpreting this, we understand the space $\P_s(f)$ to be the space $\R$ of constants
if $f$ is $0$-dimensional (a single vertex) and $s\ge 0$.  Also the space
$\P_s(f)=0$ if $s<0$ and $f$ is arbitrary.  The notation $\tr_f u$ denotes the trace
of $u$ on $f$, i.e., its restriction.  Thus there is one degree of freedom
associated to each vertex $v$, namely the evaluation functional $u\mapsto u(v)$.
For $r\ge2$ there are also degrees of freedom associated to the edges $e$ of $T$, namely
the moments of $u$ on the edge of degree at most $r-2$:
$$
u\mapsto  \int_e (\tr_e u)q, \ q\in \P_{r-2}(e).
$$
For $r\ge 3$ there are degrees of freedom associated to the $2$-faces, namely moments of degree
at most $r-3$, etc.  This is often indicated in a degree of freedom diagram, like that
of Figure~\ref{fg:dofs}, in which the number of symbols drawn in the interior of a face is equal
to the number of degrees of freedom associated to the face.
\begin{figure}[htb]
 \includegraphics[height=1.5in]{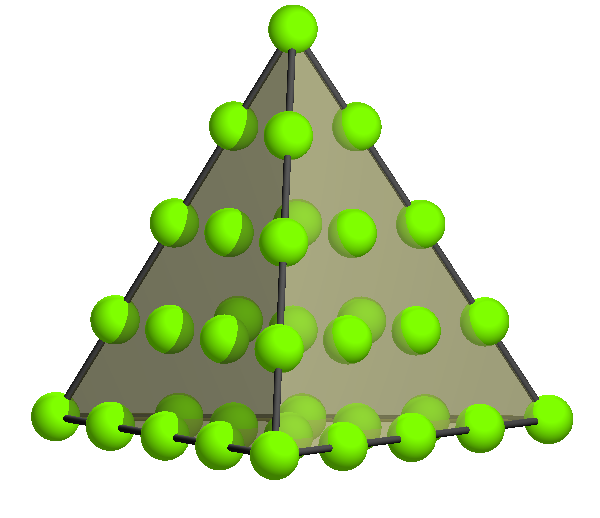}
\caption[]{Degrees of freedom for the Lagrange quartic space $\P_4\Lambda^0$ in $3$ dimensions.}\label{fg:dofs}
\end{figure}

A requirement of the definition of a finite element space is that the degrees of freedom be unisolvent.
We present the proof for Lagrange elements in detail, since it will guide us when it comes to verifying unisolvence for more
complicated spaces.
\begin{thm}[Unisolvence for the Lagrange elements]
For any $r\ge1$ and any $n$-simplex $T$, the degrees of freedom \eqref{lagdofs} are unisolvent on $V(T)=\P_r(T)$.
\end{thm}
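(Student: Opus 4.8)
The plan is to argue in two stages, as is natural for establishing unisolvence: first check that the number of functionals in \eqref{lagdofs} equals $\dim\P_r(T)$, and then show that any $u\in\P_r(T)$ annihilated by all of them must vanish. Since \eqref{lagdofs} is a spanning set of candidate functionals of the right cardinality which separates points of $V(T)$, these two facts together imply it is a basis of $V(T)^*$. For the count, a $d$-dimensional face $f$ carries $\dim\P_{r-d-1}(f)=\binom{r-1}{d}$ functionals (with the stated conventions, and this being $0$ when $r\le d$), and $T$ has $\binom{n+1}{d+1}$ faces of dimension $d$, so the total is $\sum_{d=0}^{n}\binom{n+1}{d+1}\binom{r-1}{d}$. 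By the Vandermonde identity this equals $\binom{n+r}{n}=\dim\P_r(T)$, so the count matches.

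For the nonvanishing assertion, suppose $u\in\P_r(T)$ kills every functional in \eqref{lagdofs}. I would prove by induction on $d$ that $\tr_f u=0$ for every $f\in\Delta_d(T)$, running $d$ from $0$ up to $n$; the terminal case $d=n$ is exactly the desired conclusion $u=0$. The base case $d=0$ is immediate, since the functionals associated to a vertex $v$ is the evaluation $u\mapsto u(v)$, so $u$ vanishes at every vertex. For the inductive step, fix a $d$-face $f$ with associated barycentric coordinates $\lambda_{i_0},\dots,\lambda_{i_d}$, and let $b_f=\lambda_{i_0}\cdots\lambda_{i_d}$ be the corresponding bubble function, strictly positive on the relative interior of $f$. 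By the inductive hypothesis $\tr_f u\in\P_r(f)$ vanishes on every proper subface of $f$, hence on all of $\partial f$. One then shows $\tr_f u=b_f\,p$ for some $p\in\P_{r-d-1}(f)$: a polynomial on $f$ vanishing on the zero set in $f$ of the affine function $\lambda_{i_j}$ is divisible by $\lambda_{i_j}$ (after an affine change of coordinates, the elementary fact that a polynomial vanishing on a coordinate hyperplane has that coordinate as a factor), and one iterates over $j=0,\dots,d$. Finally, testing the functional associated to $f$ against $q=p\in\P_{r-d-1}(f)$ gives $\int_f b_f\,p^2=0$, and positivity of $b_f$ on the relative interior of $f$ forces $p\equiv0$, so $\tr_f u=0$, completing the induction.

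I expect the factorization step to be the main point requiring care: one needs that the barycentric coordinates are affine functions whose zero sets cut out the codimension-one faces, that a polynomial vanishing on such a face (which has nonempty relative interior inside the corresponding hyperplane) is genuinely divisible by that coordinate as a polynomial, and — essential for the bookkeeping — that the quotient $p$ lands in $\P_{r-d-1}(f)$ and no smaller space, so that it is tested against precisely the functional space appearing in \eqref{lagdofs}. This exact matching of polynomial degrees between the bubble factor and the moment space is what makes the argument close, and it is the template that will recur, with the Koszul operator replacing the bubble-function factorization, when unisolvence of the $\P_r^-\Lambda^k$ spaces is treated in Theorem~\ref{unisolv}.
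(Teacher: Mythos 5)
Your proof is correct and follows essentially the same route as the paper: the identical dimension count via the Vandermonde identity, followed by the trace-vanishing, bubble-function factorization, and interior-moment argument. The only difference is organizational: you induct on the face dimension $d$ within a fixed $T$, whereas the paper inducts on the ambient dimension $n$ and applies the theorem itself to each facet --- the two inductions unroll to the same computation.
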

\begin{proof}
It suffices to verify, first, that the number of degrees of freedom proposed for $T$ does not exceed $\dim V(T)$,
and, second, that if all the degrees of freedom vanish when applied to some $u\in V(T)$, then $u\equiv 0$.
For the first claim, we have by \eqref{lagdofs} that
the total number of degrees of freedom is at most
$$
\sum_{d=0}^n\#\Delta_d(T)\dim\P_{r-d-1}(\R^d)=
\sum_{d=0}^n\binom{n+1}{d+1}\binom{r-1}{d}=\binom{n+r}{n}=\dim\P_r(T),
$$
where the second equality is a binomial identity which comes from expanding in the equation
$(1+x)^{n+1}(1+x)^{r-1}=(1+x)^{n+r}$ and comparing the coefficients of $x^n$ on both sides.

We prove the second claim by induction on the dimension $n$, the case $n=0$ being trivial.
Suppose that $u\in\P_r(T)$ for some simplex $T$ of dimension $n$
and that all the degrees of freedom in \eqref{lagdofs} vanish.  We wish to show that $u$ vanishes.
Let $F\in\Delta_{n-1}(T)$ be a facet of $T$, 
and consider $\tr_{F} u$, which is a polynomial function of at most degree $r$ on the $(n-1)$-dimensional
simplex $F$, i.e., it belongs
to $\P_r(F)$.  Moreover, if we replace $T$ by $F$ and $u$ by $\tr_{F}u$ in \eqref{lagdofs}, the
resulting functionals vanish by assumption (using
the obvious fact that $\tr_f\tr_{F} u = \tr_f u$ for $f\subset F\subset T$).  By induction we
conclude that $\tr_{F} u$ vanishes on all the facets $F$ of $T$.  Therefore, $u$ is divisible by the
barycentric coordinate function $\lambda_i$ which vanishes on $F$, and, since this holds
for all facets, $u=(\prod_{i=0}^n\lambda_i)p$
for some $p\in\P_{r-n-1}(T)$.  Taking $f=T$ and $q=p$ in \eqref{lagdofs} we conclude that
$$
\int_T (\prod_{i=0}^n\lambda_i)p^2 =0,
$$ which implies that $p$ vanishes on $T$, and so $u$ does as well.
\end{proof}

Let us note some features of the proof, which will be common to the unisolvence proofs for all of
the finite element spaces we discuss here.  After a dimension count to verify that the proposed
degrees of freedom are correct in number, or at least no more than required, the proof
proceeded by induction on the number of
space dimensions.  The inductive step relied on a trace property of the shape function space $V(T)=\P_r(T)$
for the family, namely that $\tr_F V(T)\subset V(F)$.  Moreover, it used a similar trace
property for the degrees of freedom: if $\xi_F\in V(F)^*$ is a degree of freedom for $V(F)$, 
then the pullback $\xi_F\circ \tr_F\in V(T)^*$ is a degree of freedom for
$V(T)$.  The induction reduced the unisolvence proof to verifying that if $u\in \0V(T)$, the space of functions in $V(T)$ whose
trace vanishes on the entire boundary, and if the interior degrees of freedom (those associated to $T$ itself) of $u$
vanish, then $u$ itself vanishes, which we showed by explicit construction.

Finally, we note that the continuity implied by the degrees of freedom is exactly what
is required to insure that the Lagrange finite element space is contained in $H^1$:
\begin{equation}\label{lagcont}
 \P_r\Lambda^0(\T_h) = \{\,u\in H^1(\Omega)\,|\, u\text{ belongs to $\P_r(T)$ piecewise}\,\}.
\end{equation}
Indeed, a piecewise smooth function belongs to $H^1(\Omega)$ if and only if its traces
on faces are single-valued.  Thus if a function in $H^1(\Omega)$ belongs piecewise to $\P_r(T)$,
its traces
are single-valued, so the degrees of freedom
are single-valued, and the function belongs to $\P_r\Lambda^0(\T_h)$.
On the other hand, if the function belongs to $\P_r\Lambda^0(\T_h)$,
its traces on faces are single-valued, since, as we saw in the course of the
unisolvence proof, they are determined by the degrees of freedom.  Thus
the function belongs to $H^1(\Omega)$.

\subsection{Exterior calculus}
For the convenience of readers less familiar with differential forms
and exterior calculus we now briefly review
key definitions and properties.  We begin with the space of \emph{algebraic} $k$-forms on
$V$:
$\Alt^k V  = \{\, L:V^k\to\R\,|\, \text{$k$-linear, skew-symmetric}\,\}$, where the
multilinear form $L$ is skew-symmetric, or alternating, if it changes sign under
the interchange of any two of its arguments.
The skew-symmetry condition is vacuous if $k<2$, so
$\Alt^1V=V^*$ and, by convention, $\Alt^0V=\R$.  If $\omega$ is any $k$-linear map  $V^k\to \R$,
then $\skw\omega\in\Alt^kV$ where
$$
(\skw \omega)(v_1,\ldots,v_k) = \frac{1}{k!}\sum_{\sigma}\sign(\sigma)\omega(v_{\sigma_1},\ldots,v_{\sigma_k}),
$$
with the sum taken over all the permutations of the integers $1$ to $k$.
The wedge product $\Alt^kV \x \Alt^lV \to \Alt^{k+l}V$ is defined
$$
\omega\wedge\mu = \binom{k+l}{k} \skw(\omega\otimes\mu), \quad \omega\in\Alt^kV,\ \mu\in\Alt^lV.
$$
Let $v_1,\ldots,v_n$ form a basis for $V$.  Denoting by
$$
\Sigma(k,n) = \{\,(\sigma_1,\ldots,\sigma_k)\in \mathbb N^k\,|\,1\le \sigma_1<\cdots<\sigma_k\le n\,\},
$$
an element of $\Alt^kV$ is completely determined by the values it assigns to the $k$-tuples $(v_{\sigma_1},\ldots, v_{\sigma_k})$,
$\sigma\in\Sigma_k$.
Moreover, these values can be assigned arbitrarily.  In fact, the $k$-form
$\mu_{\sigma_1}\wedge\cdots\wedge\mu_{\sigma_k}$,
where $\mu_1,\ldots,\mu_n$ is the dual basis to $v_1,\ldots,v_n$, takes the $k$-tuple
$(v_{\sigma_1},\ldots, v_{\sigma_k})$ to $1$, and the other such $k$-tuples to $0$.
Thus $\dim\Alt^k V=\binom{n}{k}$, where $n=\dim V$.

We define differential forms on an arbitrary manifold, since we will be using
them both when the manifold is a domain in $\R^n$ and when it is the boundary of such a domain.
A differential $k$-form on a manifold $\Omega$ is a map $\omega$ which takes each point $x\in\Omega$
to an element $\omega_x\in\Alt^k T_x\Omega$, where $T_x\Omega$ is the tangent space to $\Omega$ at $x$.
In other language, $\omega$ is a skew-symmetric covariant tensor field
on $\Omega$ of order $k$.  In particular, a differential $0$-form is just a real-valued
function on $\Omega$ and a differential $1$-form is a covector field.
In the case $\Omega$ is a domain in $\R^n$, then
each tangent space can be identified with $\R^n$, and a differential $k$-form is simply a map
$\Omega\to\Alt^k\R^n$.  In this context, it is common to denote the dual basis to the canonical
basis for $\R^n$ by $dx^1,\ldots,dx^n$, so $dx^k$ applied to a vector $v=(v^1,\ldots,v^n)\in\R^n$
is its $k$th component $v^k$.  With this notation,  an arbitrary differential $k$-form can be written
$$
u(x) = \sum_{\sigma\in\Sigma(k,n)} a_\sigma(x)\,dx^{\sigma_1}\wedge\cdots\wedge dx^{\sigma_k},
$$
for some coefficients $a_\sigma:\Omega\to\R$.

Three basic operations on differential forms are the exterior derivative, the form integral, and
the pullback.  The exterior derivative $d\omega$ of a $k$-form $\omega$ is a $(k+1)$-form.  In
the case of a domain in $\R^n$, it is given by the intuitive formula
$$
d(a_\sigma\,dx^{\sigma_1}\wedge\cdots\wedge dx^{\sigma_k})
=\sum_{j=1}^n\frac{\partial a_\sigma}{\partial x_j}\,dx^j\wedge dx^{\sigma_1}\wedge\cdots\wedge dx^{\sigma_k}.
$$
It satisfies (in general) the identity $d^{k+1}\circ d^k=0$
and the Leibniz rule $d(\omega\wedge \mu)=(d\omega)\wedge \mu +(-1)^k\omega\wedge(d\mu)$
if $\omega$ is a $k$-form.

  The definition of the form integral
requires that the manifold $\Omega$ be oriented.  In this case we can define $\int_\Omega \omega\in\R$
for $\omega$ an $n$-form with $n=\dim\Omega$.  The integral changes sign if the orientation of the
manifold is reversed.

Finally, if $F:\Omega\to\Omega'$ is a differentiable map, then the pullback
$F^*$ takes a $k$-form on $\Omega'$ to one on $\Omega$ by
$$
(F^*\omega)_x(v_1,\ldots,v_k)=\omega_{F(x)}(dF_xv_1,\ldots,dF_xv_k), \quad x\in\Omega,\ v_1,\ldots,v_k\in T_x\Omega.
$$
The pullback respects the operations of wedge product, exterior derivative, and form integral:
$$
F^*(\omega\wedge\mu)=(F^*\omega)\wedge(F^*\mu),\quad
F^*(d\omega)=d(F^*\omega), \quad \int_\Omega F^*\omega=\int_{\Omega'}\omega,
$$
for $\omega$ and $\mu$ differential forms on $\Omega'$.  The last relation requires that $F$ be a diffeomorphism
of $\Omega$ with $\Omega'$ which preserves orientation.

An important special case of pullback is when $F$ is the inclusion of a submanifold $\Omega$ into a larger manifold $\Omega'$.
In this case the pullback is the trace operator taking a $k$-form on $\Omega'$ to a $k$-form
on the submanifold $\Omega$.  All these operations combine elegantly into Stokes' theorem, which says that, under
minimal hypothesis on the smoothness of the differential $(n-1)$-form $\omega$ and the $n$-manifold $\Omega$,
$$
\int_{\partial\Omega}\tr \omega = \int_\Omega d\omega.
$$

If $V$ is an inner product space, then there is a natural inner product on $\Alt^k V$.  Thus for
a Riemannian manifold, such as any manifold embedded in $\R^n$, the inner product $\<\omega_x,\mu_x\>\in\R$
is defined for any $k$-forms $\omega$, $\mu$ and any $x\in\Omega$.  An oriented Riemannian manifold
also has a unique volume form, $\vol$, a differential $n$-form which at each point assigns the value $1$ to a positively
oriented orthonormal basis for the tangent space at that point.  For a subdomain of $\R^n$ the volume form
is the constant $n$-form with the value $dx^1\wedge\cdots\wedge dx^n$ at each point. Combining these notions,
we see that on any oriented Riemannian manifold we may define the $L^2$-inner product of $k$-forms:
$$
\<\omega,\mu\>_{L^2\Lambda^k(\Omega)} = \int_\Omega\<\omega_x,\mu_x\>\,\vol.
$$
The space $L^2\Lambda^k$ is of course the space of $k$-forms for which
$\|\omega\|_{L^2\Lambda^k}:=\sqrt{\<\omega,\omega\>_{L^2\Lambda^k}}<\infty$,
and then $H\Lambda^k$ is defined as in \eqref{hlambda}.

\section{Families of finite element differential forms on simplicial meshes}\label{sec:simp}
Our goal now is to create finite element subspaces of the spaces $H\Lambda^k$
which fit together to yield a subcomplex with commuting projections.
In this section the spaces will be constructed for a simplicial
triangulation $\T_h$ of the domain $\Omega\subset\R^n$.  Thus,
for a simplex $T$, we must specify
a space $V(T)$ of polynomial differential forms
and a set of degrees of freedom for it.

\subsection{The polynomial space $\P_r\Lambda^k$}
An obvious choice for $V(T)$ is the space
$$
\P_r\Lambda^k(T) = \{\sum_{\sigma\in\Sigma(k,n)} p_\sigma\,dx^\sigma\,|\, p_\sigma\in
\P_r(T)\,\},
$$
of a differential $k$-forms with polynomial coefficients of degree at most $r$.  It is
easy to compute its dimension:
\begin{equation}\label{dimPr}
\dim\P_r\Lambda^k(T) = \#\Sigma(k,n)\x \dim\P_r(T) = \binom{n}{k}\binom{n+r}{n} =
\binom{n+r}{n-k}\binom{r+k}{r}.
\end{equation}
Note that $d\P_r\Lambda^k\subset \P_{r-1}\Lambda^{k+1}$, i.e., the exterior derivative
lowers the polynomial degree at the same time as it raises the form degree.  Therefore,
for each $r$ we have a subcomplex of the de~Rham complex:
\begin{equation}\label{pdR}
\P_r\Lambda^0 \xrightarrow{d} \P_{r-1}\Lambda^1 \xrightarrow{d} 
\cdots \xrightarrow{d} \P_{r-n}\Lambda^n \to 0.
\end{equation}
This complex is exact (we have left off the initial $0$ since the first map,
$d=\grad$ acting on $\P_r\Lambda^0$ has a $1$-dimensional kernel, consisting of
the constant functions).  That is, if $\omega\in\P_s\Lambda^k$ and $d\omega=0$
then $\omega = d\mu$ for some $\mu\in\P_{s+1}\Lambda^{k-1}$.  We prove
this in Corollary~\ref{exact} below, using
an elementary but powerful tool called the \emph{Koszul complex}.  The
same tool will also be used to define the degrees of freedom for $\P_r\Lambda^k(T)$,
and to define an alternative space
of shape functions.

\subsection{The Koszul complex}
For a domain in $\Omega\subset\R^n$
(but not a general manifold), the identity map may be viewed as a vector field.
It assigns to an arbitrary point $x\in\Omega\subset\R^n$ the point itself viewed
as a vector in $\R^n$ and so an element of the tangent space $T_x\Omega$.
Contracting a $k$-form $\omega$ with this identity vector field gives a $(k-1)$-form $\kappa\omega$:
$$
(\kappa\omega)_x(v_1,\ldots,v_{k-1})=\omega_x(x,v_1,\ldots,v_{k-1}),
\quad x\in\Omega,\ v_1,\ldots,v_{k-1}\in\R^n.
$$
Since $\omega_x$ is skew-symmetric, $\kappa\kappa\omega=0$, that is, $\kappa$ is a differential.
It satisfies a Leibniz rule:
$$
\kappa(\omega\wedge\mu) = (\kappa\omega)\wedge\mu + (-1)^k\omega\wedge(\kappa\mu),
$$
for a $k$-form $\omega$ and a second form $\mu$.  In particular $\kappa(f\omega)=f\kappa\omega$
if $f$ is a function.  Also $\kappa dx^i=x^i$.  These properties fully determine $\kappa$.
Thus
$$
\kappa (dx^i\wedge dx^j) = x^i\,dx^j - x^j\,dx^i, \quad
\kappa (dx^i\wedge dx^j\wedge dx^k) = x^i\, dx^j\wedge dx^k-x^j\,dx^i\wedge dx^k+x^k\,dx^i\wedge dx^j,
$$
and so forth.
If we identify $1$-forms with vector fields, then $\kappa$ corresponds to the dot product
of the vector field with $x$ (or, more properly, with the identity vector field).
On $2$-forms in 3-D, $\kappa$ is the cross product with $x$, and on $3$-forms it is the product
of a scalar field with $x$ to get a vector field.

The Koszul differential $\kappa$ maps the space $\P_r\Lambda^k$ of differential $k$-forms
with coefficients in $\P_r(\Omega)$ to $\P_{r+1}\Lambda^{k-1}$, exactly
the reverse of $d$.  Thus both $\kappa d$ and $d\kappa$ map $\P_r\Lambda^k$ to itself.
The following theorem points to an intimate relation between $\kappa$ and $d$,
called the \emph{homotopy formula}.  In it
we write $\H_r\Lambda^k$ for the $k$-forms with \emph{homogeneous} polynomial
coefficients of degree $r$.
\begin{thm}[Homotopy formula]\label{homotopy}
 $$
(\kappa d + d\kappa) \omega = (k+r)\omega, \quad \omega\in\H_r\Lambda^k.
$$
\end{thm}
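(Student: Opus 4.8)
The plan is to exploit bilinearity of both $\kappa$ and $d$ in order to reduce the identity to monomial generators of $\H_r\Lambda^k$, and then to verify it on such generators by an explicit (but short) computation using the Leibniz rules for $\kappa$ and $d$. Every element of $\H_r\Lambda^k$ is a linear combination of forms of the type $\omega = x^\alpha\,dx^{\sigma_1}\wedge\cdots\wedge dx^{\sigma_k}$ with $|\alpha| = r$, so it suffices to check the homotopy formula on one such $\omega$. I would further split $\omega = f\,\mu$ where $f = x^\alpha$ is a homogeneous polynomial of degree $r$ and $\mu = dx^{\sigma_1}\wedge\cdots\wedge dx^{\sigma_k}$ is a constant $k$-form.

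First I would record the two Leibniz rules in the forms we need. For $d$, since $f$ is a $0$-form and $d\mu = 0$, we get $d(f\mu) = (df)\wedge\mu$. For $\kappa$, applying the stated Leibniz rule to the product $f\mu$ (with $f$ a function) gives $\kappa(f\mu) = f\,\kappa\mu$. Combining these, $d\kappa\omega = d(f\,\kappa\mu) = (df)\wedge\kappa\mu + f\,d\kappa\mu$, and $\kappa d\omega = \kappa\bigl((df)\wedge\mu\bigr) = (\kappa\, df)\wedge\mu - (df)\wedge\kappa\mu$, where in the last step I used the Leibniz rule for $\kappa$ with the $1$-form $df$ in the first slot (the sign is $(-1)^1 = -1$) together with $d(df)=0$ implying nothing extra — actually I only need $df$ to be a $1$-form here. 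Adding the two displays, the cross terms $\pm(df)\wedge\kappa\mu$ cancel, leaving
$$
(\kappa d + d\kappa)\omega = (\kappa\, df)\wedge\mu + f\,d\kappa\mu.
$$
Now $\kappa\, df = \kappa\bigl(\sum_j (\partial f/\partial x^j)\,dx^j\bigr) = \sum_j (\partial f/\partial x^j)\,x^j$, which by Euler's identity for the homogeneous polynomial $f$ of degree $r$ equals $r f$. For the second term I need $d\kappa\mu$ where $\mu$ is a constant $k$-form: from the explicit formula $\kappa(dx^{\sigma_1}\wedge\cdots\wedge dx^{\sigma_k}) = \sum_{i} (-1)^{i-1} x^{\sigma_i}\,dx^{\sigma_1}\wedge\cdots\widehat{dx^{\sigma_i}}\cdots\wedge dx^{\sigma_k}$, applying $d$ returns $k$ copies of $\pm\,dx^{\sigma_i}$ reinserted in the right slot, i.e. $d\kappa\mu = k\mu$. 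Hence the right side is $rf\mu + kf\mu = (k+r)f\mu = (k+r)\omega$, as claimed.

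The only mildly delicate points are bookkeeping: that $\kappa$ really is bilinear enough to reduce to generators (it is $\R$-linear and $f$-linear, which is all we use), and the two small sub-lemmas $\kappa\,df = (\deg f)\,f$ for homogeneous $f$ and $d\kappa\mu = (\deg\mu)\,\mu$ for constant forms $\mu$. Both follow directly from the defining properties of $\kappa$ already established in the text ($\kappa\,dx^i = x^i$, the Leibniz rule, and $\kappa f = $ nothing since $\kappa$ annihilates $0$-forms — indeed a $0$-form contracted with a vector field is $0$), so I do not anticipate a genuine obstacle; the argument is a clean cancellation once the Leibniz rules are applied in the right order. If one prefers to avoid even the reduction to monomials, the same computation goes through verbatim with $f$ an arbitrary homogeneous polynomial of degree $r$, using Euler's identity in place of the explicit monomial bookkeeping.
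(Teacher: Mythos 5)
Your computation is correct and follows exactly the route the paper indicates (direct verification via Euler's identity); the paper itself only sketches this argument, deferring the details to Theorem 3.1 of the cited Acta Numerica article. Your splitting $\omega=f\mu$, the cancellation of the cross terms $\pm(df)\wedge\kappa\mu$, and the two sub-lemmas $\kappa\,df=rf$ and $d\kappa\mu=k\mu$ constitute precisely that standard direct computation.
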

\begin{proof}[Remarks on the proof]
 The case $k=0$ is Euler's identity $x\cdot \grad p = r\, p$ for $p$ a homogeneous
polynomial of degree $r$.  Using it, we can verify the theorem by direct computation.
Alternatively, one may use Cartan's homotopy formula from differential geometry.
For details on both proofs, see Theorem 3.1 of \cite{acta}.
\end{proof}
\begin{cor}\label{exact}
 The polynomial de Rham complex \eqref{pdR}
and the Koszul complex
$$
0\to \P_{r-n}\Lambda^n \xrightarrow{\kappa}\P_{r-n+1}\Lambda^{n-1}
\xrightarrow{\kappa} \cdots \xrightarrow{\kappa} \P_r\Lambda^0
$$
are both exact.
\end{cor}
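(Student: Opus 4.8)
The plan is to deduce both exactness statements directly from the homotopy formula of Theorem~\ref{homotopy}, using only that the scalar $k+r$ appearing there is positive except in the single degree $(k,r)=(0,0)$. The first step is to reduce to homogeneous forms: since $\P_s\Lambda^k = \bigoplus_{i=0}^{s}\H_i\Lambda^k$, and since $d$ carries $\H_i\Lambda^k$ into $\H_{i-1}\Lambda^{k+1}$ while $\kappa$ carries $\H_i\Lambda^k$ into $\H_{i+1}\Lambda^{k-1}$, both operators preserve homogeneity; hence a form annihilated by $d$ (respectively by $\kappa$) has each of its homogeneous parts annihilated, and it suffices to construct a primitive for each part. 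One then checks that these primitives reassemble into a form of the polynomial degree demanded by the adjacent node---to the left for $d$, to the right for $\kappa$---which is immediate from the degree shifts just recorded.

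For the polynomial de~Rham complex \eqref{pdR}, let $\omega\in\H_i\Lambda^k$ satisfy $d\omega=0$ with $1\le k\le n$, so that $\omega$ sits at an interior node of \eqref{pdR} or at its terminal node. Then $i+k\ge 1$, and Theorem~\ref{homotopy} gives $(i+k)\omega = d(\kappa\omega)+\kappa(d\omega)=d(\kappa\omega)$, so that $\omega=d\bigl((i+k)^{-1}\kappa\omega\bigr)$ with $\kappa\omega\in\H_{i+1}\Lambda^{k-1}$. Summing over homogeneous components proves exactness of \eqref{pdR} at $\Lambda^k$ for every $k\ge1$; the node $\Lambda^0$ is deliberately excluded, since there $d$ has the one-dimensional kernel of constants, which is exactly why \eqref{pdR} carries no initial $0$.

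The Koszul complex is treated by the mirror argument. Let $\omega\in\H_i\Lambda^k$ satisfy $\kappa\omega=0$ with $1\le k\le n$. Again $i+k\ge1$, and now Theorem~\ref{homotopy} reads $(i+k)\omega = \kappa(d\omega)$, so $\omega=\kappa\bigl((i+k)^{-1}d\omega\bigr)$ with $d\omega\in\H_{i-1}\Lambda^{k+1}$, exhibiting $\omega$ in the image of $\kappa$. Summing over homogeneous parts gives exactness at $\P_{r-k}\Lambda^k$ for $1\le k\le n$: this covers the interior nodes and, in the case $k=n$ where $d\omega$ vanishes trivially, the injectivity of $\kappa$ at the left-hand node $\P_{r-n}\Lambda^n$. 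Since the displayed Koszul complex carries no terminal $\to 0$, nothing need be verified at $\Lambda^0$.

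The argument is short once the homotopy formula is in hand; the only point requiring care---and it is a mild one---is the bookkeeping of which nodes must be checked and the corresponding values of $i+k$. The essential observation is that $i+k=0$ forces $i=k=0$, i.e.\ the constant $0$-forms, and neither displayed complex asserts exactness at a node where constants could obstruct it: the de~Rham complex has been truncated at its $\Lambda^0$ end and the Koszul complex likewise at its $\Lambda^0$ end. Thus the one degree the homotopy formula cannot reach never arises, and the ``main obstacle'' amounts to nothing more than this degree accounting.
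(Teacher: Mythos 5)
Your proof is correct and follows essentially the same route as the paper's: decompose into homogeneous components, apply the homotopy formula of Theorem~\ref{homotopy} to produce a primitive via $\kappa$ (respectively $d$), and sum. The extra bookkeeping you supply---verifying that $i+k\ge 1$ at every node where exactness is asserted and that the primitives land in the right polynomial degree---is exactly the detail the paper leaves implicit.
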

\begin{proof}
For the de~Rham complex, it suffices to establish exactness of the homogeneous polynomial de Rham complex
$$
\H_r\Lambda^0 \xrightarrow{d} \H_{r-1}\Lambda^1 \xrightarrow{d} 
\cdots \xrightarrow{d} \H_{r-n}\Lambda^n \to 0,
$$
since then we can then just sum to get the result.  We must show that
if $\omega\in \H_s\Lambda^k$ and $d\omega=0$ then $\omega$
is in the range of $d$.  Indeed, by the homotopy formula
$$
\omega = (s+k)^{-1}(d\kappa + \kappa d)\omega = (s+k)^{-1}d\kappa\omega.
$$
A similar proof holds for the Koszul complex.
\end{proof}
Another important consequence is a direct sum decomposition:
\begin{cor} For $r\ge1$, $0\le k\le n$, 
 \begin{equation}\label{Hdirectsum}
\H_r\Lambda^k = \kappa\H_{r-1}\Lambda^{k+1} \oplus
d\H_{r+1}\Lambda^{k-1}.
\end{equation}
\end{cor}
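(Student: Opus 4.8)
The plan is to deduce everything from the homotopy formula of Theorem~\ref{homotopy}, which is available because the hypothesis $r\ge1$ (together with $k\ge0$) guarantees that the scalar $k+r$ is a positive integer, hence invertible. First I would record the two relevant mapping properties of the operators: $d$ sends $\H_{r+1}\Lambda^{k-1}$ into $\H_r\Lambda^k$, and $\kappa$ sends $\H_{r-1}\Lambda^{k+1}$ into $\H_r\Lambda^k$, so that both summands on the right-hand side of \eqref{Hdirectsum} genuinely sit inside $\H_r\Lambda^k$ and the asserted equality makes sense.

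Next I would prove that the sum exhausts $\H_r\Lambda^k$. Given $\omega\in\H_r\Lambda^k$, the homotopy formula gives
$$
\omega = \frac{1}{k+r}\bigl(d\kappa\omega + \kappa d\omega\bigr).
$$
Here $\kappa\omega\in\H_{r+1}\Lambda^{k-1}$, so $d\kappa\omega\in d\H_{r+1}\Lambda^{k-1}$; and $d\omega\in\H_{r-1}\Lambda^{k+1}$, so $\kappa d\omega\in\kappa\H_{r-1}\Lambda^{k+1}$. Thus $\omega$ is a sum of an element of $d\H_{r+1}\Lambda^{k-1}$ and an element of $\kappa\H_{r-1}\Lambda^{k+1}$, which is exactly the statement that $\H_r\Lambda^k = \kappa\H_{r-1}\Lambda^{k+1} + d\H_{r+1}\Lambda^{k-1}$.

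Finally I would show the sum is direct by checking the intersection is trivial. Suppose $\omega$ lies in both summands, say $\omega = \kappa\eta$ with $\eta\in\H_{r-1}\Lambda^{k+1}$ and $\omega = d\zeta$ with $\zeta\in\H_{r+1}\Lambda^{k-1}$. Since $\kappa\kappa=0$ we get $\kappa\omega = \kappa\kappa\eta = 0$, and since $dd=0$ we get $d\omega = dd\zeta = 0$. The homotopy formula then yields $(k+r)\omega = d\kappa\omega + \kappa d\omega = 0$, and dividing by the nonzero integer $k+r$ gives $\omega=0$. This completes the proof. I do not anticipate a genuine obstacle here; the only point requiring care is the degree bookkeeping for $\kappa$ (it raises polynomial degree while lowering form degree) and the observation that $r\ge1$ is precisely what is needed to invert $k+r$ in every case $0\le k\le n$.
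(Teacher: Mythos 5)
Your proposal is correct and follows exactly the paper's argument: the homotopy formula gives the sum decomposition, and the identities $\kappa\kappa=0$ and $dd=0$ combined with the homotopy formula show the intersection is trivial. The degree bookkeeping and the remark on the invertibility of $k+r$ are accurate elaborations of details the paper leaves implicit.
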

\begin{proof}
 By the homotopy formula, any element of $\H_r\Lambda^k$ belongs
to $\kappa\H_{r-1}\Lambda^{k+1} +
d\H_{r+1}\Lambda^{k-1}$.  Moreover the intersection of these two
spaces is zero, since if $\omega$ belongs to the intersection,
then $d\omega=0$, $\kappa\omega=0$, so $\omega=0$ by the homotopy formula.
\end{proof}

\subsection{The polynomial space $\P_r^-\Lambda^k$}
We now define a second space of polynomial differential forms
which can be used as shape functions.
We have
$$
\P_r\Lambda^k = \P_{r-1}\Lambda^k \oplus \H_r\Lambda^k =
\P_{r-1}\Lambda^k \oplus\kappa\H_{r-1}\Lambda^{k+1} \oplus
d\H_{r+1}\Lambda^{k-1}.
$$
If we drop the last summand, we get a space intermediate between $\P_{r-1}\Lambda^k$
and $\P_r\Lambda^k$:
\begin{equation}\label{defPrm}
\P_r^-\Lambda^k:= \P_{r-1}\Lambda^k +\kappa\H_{r-1}\Lambda^{k+1}.
\end{equation}
Note that $\P_r^-\Lambda^0=\P_r\Lambda^0$ and
$\P_r^-\Lambda^n=\P_{r-1}\Lambda^n$, but for $0<k<n$, $\P_r^-\Lambda^k$
is contained strictly between $\P_{r-1}\Lambda^k$ and $\P_r\Lambda^k$.
We may compute the dimension of $\kappa\H_r\Lambda^k$, using the exactness of the Koszul
complex and induction (see \cite[Theorem 3.3]{acta}).
This then yields
a formula for the dimension of $\P_r^-\Lambda^k$:
$$
\dim \P_r^-\Lambda^k = \binom{n+r}{n-k}\binom{r+k-1}{k}.
$$
Comparing this with \eqref{dimPr}, we have
$$
\dim \P_r^-\Lambda^k = \frac{r}{r+k}\dim\P_r\Lambda^k
$$
(showing again that the spaces coincide for $0$-forms).

Now
$$
d\P_r^-\Lambda^k\subset d\P_r\Lambda^k\subset \P_{r-1}\Lambda^{k+1}\subset\P_r^-\Lambda^{k+1},
$$
so we obtain another subcomplex of the de~Rham complex:
\begin{equation}\label{pmdR}
\P_r^-\Lambda^0\xrightarrow{d} \P_r^-\Lambda^1\xrightarrow{d}
\cdots\xrightarrow{d}\P_r^-\Lambda^n\to 0.
\end{equation}
Note that, in contrast to \eqref{pdR}, in this complex the degree $r$ is
held constant.
However, like \eqref{pdR}, the complex \eqref{pmdR} is exact.  Indeed,
$$
d\P_r^-\Lambda^k=d(\P_r^-\Lambda^k+d\P_{r+1}\Lambda^{k-1})=
d\P_r\Lambda^k=
\Null(d|{\P_{r-1}\Lambda^{k+1}})=\Null(d|{\P_r^-\Lambda^{k+1}}),
$$
where the penultimate equality follows from
Corollary~\ref{exact} and the last equality is a consequence of the definition
\eqref{defPrm} and the homotopy formula
Theorem~\ref{homotopy}.

\subsection{The $\P_r^-\Lambda^k(\T_h)$ family of finite element differential forms}
Let $r\ge 1$, $0\le k\le n$, and let $\T_h$ be a simplicial mesh of $\Omega\subset\R^n$.
We define a finite element subspace $\P_r^-\Lambda^k(\T_h)$ of $H\Lambda^k(\Omega)$.
As shape functions on a simplex $T\in\T_h$ we take $V(T)=\P_r^-\Lambda^k(T)$.
As degrees of freedom we take
\begin{equation}\label{Prmdofs}
u\in\P_r^-\Lambda^k(T)\mapsto \int_f (\tr_f u)\wedge q, \quad q\in\P_{r+k-d-1}\Lambda^{d-k}(f), \quad
f\in\Delta_d(T), \ d\ge k.
\end{equation}
Note that, in the case $k=0$, $V(T)=\P_r(T)$ and \eqref{Prmdofs} coincides with \eqref{lagdofs},
so the space $\P_r^-\Lambda^k(\T_h)$ generalizes the Lagrange finite elements to
differential forms of arbitrary form degree.  We shall prove unisolvence for arbitrary
polynomial degree, form degree, and space dimension at once.  The proof will use the
following lemma, which is proved via a simple construction using barycentric coordinates.
\begin{lemma}
 Let $r\ge 1$, $0\le k\le n$, and let $T$ be an $n$-simplex.
If $u\in \0\P_{r-1}\Lambda^k(T)$ and 
\begin{equation}\label{vhyp}
 \int_T u\wedge q=0, \quad q\in \P_{r+k-n-1}\Lambda^{n-k}(T), 
\end{equation}
then $u\equiv 0$.
\end{lemma}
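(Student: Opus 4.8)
The plan is to choose coordinates in which $n$ of the facets of $T$ become coordinate hyperplanes, read off divisibility of the coefficients of $u$ from the trace hypothesis, and then use \eqref{vhyp} to annihilate each coefficient by pairing it against its own cofactor. First I would order the vertices $v_0,\ldots,v_n$ of $T$ and pass to the barycentric coordinates $\lambda_1,\ldots,\lambda_n$ (with $\lambda_0=1-\lambda_1-\cdots-\lambda_n$); these form an affine coordinate system on $\R^n$, so $d\lambda_1,\ldots,d\lambda_n$ is a constant-coefficient basis of $\Lambda^1$, and every element of $\P_{r-1}\Lambda^k(T)$ is uniquely $u=\sum_{\sigma\in\Sigma(k,n)}a_\sigma\,d\lambda_{\sigma_1}\wedge\cdots\wedge d\lambda_{\sigma_k}$ with $a_\sigma\in\P_{r-1}(T)$ (passing from the $x$'s to the $\lambda$'s is an affine change of variables, which preserves polynomial degrees).

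Next I would exploit the trace hypothesis on the $n$ facets $f_i=\{\lambda_i=0\}$, $1\le i\le n$ --- note that not all $n+1$ facets are needed. On $f_i$ the one-form $d\lambda_i$ pulls back to zero, while the $d\lambda_j|_{f_i}$ with $j\ne i$, $1\le j\le n$, form a basis of $\Lambda^1(f_i)$; hence $\tr_{f_i}u=0$ forces $a_\sigma$ to vanish on $f_i$, and so --- being a polynomial vanishing on the hyperplane $\{\lambda_i=0\}$ --- $\lambda_i$ divides $a_\sigma$, for every $\sigma$ not containing $i$. Writing $\sigma^c$ for the complement $\{1,\ldots,n\}\setminus\sigma$, which has $n-k$ elements indexing pairwise non-proportional linear forms, this gives $a_\sigma=\bigl(\prod_{i\in\sigma^c}\lambda_i\bigr)c_\sigma$ with $c_\sigma\in\P_{r-1-(n-k)}(T)=\P_{r+k-n-1}(T)$. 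This degree match is the crux of the lemma: $c_\sigma$ lands exactly in the space of test forms that \eqref{vhyp} provides.

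To conclude, for each fixed $\sigma$ I would take $q=c_\sigma\,d\lambda_{\tau_1}\wedge\cdots\wedge d\lambda_{\tau_{n-k}}$ in \eqref{vhyp}, with $\tau\in\Sigma(n-k,n)$ enumerating $\sigma^c$ (admissible precisely because $c_\sigma\in\P_{r+k-n-1}(T)$). In $u\wedge q$ every term of $u$ except the $\sigma$-term is annihilated by the wedge, leaving $u\wedge q=\pm\bigl(\prod_{i\in\sigma^c}\lambda_i\bigr)c_\sigma^2\,d\lambda_1\wedge\cdots\wedge d\lambda_n$, and $d\lambda_1\wedge\cdots\wedge d\lambda_n$ is a nonzero constant multiple of the volume form. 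Hence \eqref{vhyp} yields $\int_T\bigl(\prod_{i\in\sigma^c}\lambda_i\bigr)c_\sigma^2=0$; the integrand is nonnegative on $T$ and strictly positive in its interior, so $c_\sigma\equiv0$, whence $a_\sigma\equiv0$. Since $\sigma$ was arbitrary, $u\equiv0$.

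I do not expect a genuine obstacle: the argument is elementary once good coordinates are in place. The one thing to get right is the bookkeeping --- recognizing that restricting to the $n$ coordinate facets $f_1,\ldots,f_n$ (rather than insisting on divisibility by the full bubble $\lambda_0\cdots\lambda_n$) already suffices, and that doing so yields cofactors $c_\sigma$ of exactly the degree for which \eqref{vhyp} supplies test forms. No separate treatment of the case $r+k-n-1<0$ is needed: then the test space in \eqref{vhyp} is trivial, but already $\deg a_\sigma\le r-1<n-k$ forces $a_\sigma\equiv0$, and the steps above apply verbatim.
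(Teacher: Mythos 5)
Your proof is correct and follows essentially the same route as the paper: expand $u$ in the barycentric basis $d\lambda_\sigma$, use the vanishing traces on the $n$ facets $\{\lambda_i=0\}$ to factor each coefficient as $\bigl(\prod_{i\in\sigma^c}\lambda_i\bigr)c_\sigma$ with $c_\sigma\in\P_{r+k-n-1}(T)$, and then test against complementary wedge products. The only (immaterial) difference is that you test one $\sigma$ at a time, whereas the paper takes a single $q$ summing all the $c_\sigma\,d\lambda_{\sigma^c}$ with appropriate signs and kills every coefficient at once.
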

\begin{proof}
Any element of $\P_{r-1}\Lambda^k(T)$ can be written in terms of barycentric
coordinates as
$$
u=\sum_{\sigma\in\Sigma(k,n)} u_\sigma 
 d\lambda_{\sigma_1}\wedge\cdots\wedge d\lambda_{\sigma_k}, \quad u_\sigma\in\P_{r-1}(T).
$$
Now let $1\le i\le n$, and consider the trace of $u$ on the face given by $\lambda_i=0$.
By the assumption that $u\in\0\P_{r-1}\Lambda^k(T)$, the trace vanishes.
This implies that $\lambda_i$ divides $u_\sigma$ for any $\sigma\in\Sigma(k,n)$
whose range does not contain $i$.  Thus
$$
u_\sigma=p_\sigma\lambda_{\sigma^*_1}\cdots\lambda_{\sigma^*_{n-k}}
\text{ for some $p_\sigma\in\P_{r+k-n-1}(T)$},
$$
where $\sigma^*\in\Sigma(n-k,n)$ is the increasing sequence complementary to $\sigma$.
Thus
$$
u=\sum_{\sigma\in\Sigma(k,n)} p_\sigma \lambda_{\sigma^*_1}\cdots\lambda_{\sigma^*_{n-k}}
 d\lambda_{\sigma_1}\wedge\cdots\wedge d\lambda_{\sigma_k},
\quad p_\sigma\in\P_{r+k-n-1}(T).
$$
Choosing
$$
q= \sum_{\sigma\in \Sigma(k,n)} (-1)^{\operatorname{sign}(\sigma,\sigma^*)}
 p_\sigma d\lambda_{\sigma^*_1}\wedge\cdots\wedge d\lambda_{\sigma^*_{n-k}}
$$
in \eqref{vhyp}, we get
$$
0=\int_T u\wedge q = \int_T \sum_{\sigma\in\Sigma(k,n)} p_\sigma^2 
\lambda_{\sigma^*_1}\cdots\lambda_{\sigma^*_{n-k}}d\lambda_1\wedge\cdots\wedge d\lambda_n.
$$
However, the $\lambda_i$ are positive on the interior of $T$ and the $n$-form
$d\lambda_1\wedge\cdots\wedge d\lambda_n$ is a nonzero multiple of the volume form.  Thus each
$p_\sigma$ must vanish, and so $u$ vanishes.
\end{proof}

\begin{thm}[Unisolvence for $\P_r^-\Lambda^k(\T_h)$]\label{unisolv}
 For any $r\ge1$, $0\le k\le n$, and $n$-simplex $T$, the degrees of freedom \eqref{Prmdofs}
are unisolvent for $V(T)=\P_r^-\Lambda^k(T)$.
\end{thm}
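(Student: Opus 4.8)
The plan is to follow the template of the Lagrange unisolvence proof: a dimension count, and then an induction on $n$ that uses the trace behaviour of the shape functions and of the degrees of freedom to reduce everything to the interior degrees of freedom on a single simplex. For the count, $\#\Delta_d(T)=\binom{n+1}{d+1}$ and, for $d\ge k$, $\dim\P_{r+k-d-1}\Lambda^{d-k}(\R^d)=\binom{d}{d-k}\binom{r+k-1}{d}$, so the number of functionals in \eqref{Prmdofs} is $\sum_{d=k}^n\binom{n+1}{d+1}\binom{d}{k}\binom{r+k-1}{d}$. The identity $\binom{r+k-1}{d}\binom{d}{k}=\binom{r+k-1}{k}\binom{r-1}{d-k}$ pulls the factor $\binom{r+k-1}{k}$ out of the sum, and then Vandermonde's identity gives $\sum_{d}\binom{n+1}{d+1}\binom{r-1}{d-k}=\binom{n+r}{n-k}$, so the total equals $\binom{r+k-1}{k}\binom{n+r}{n-k}=\dim\P_r^-\Lambda^k(T)$. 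Thus the count is exact, and it suffices to show that if all the degrees of freedom \eqref{Prmdofs} vanish on some $u\in\P_r^-\Lambda^k(T)$, then $u\equiv 0$.

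This I would prove by induction on $n$, the case $n=0$ being immediate. As in the Lagrange proof, the inductive step relies on two properties of the family: that $\tr_F\P_r^-\Lambda^k(T)\subseteq\P_r^-\Lambda^k(F)$ for each facet $F$ (cleanest from the barycentric-coordinate description of $\P_r^-\Lambda^k$, since the barycentric coordinates of $T$ restrict to those of $F$; alternatively from the affine invariance of $\P_r^-\Lambda^k$), and that for a subface $g\subseteq F$ the pullback under $\tr_F$ of the degree of freedom \eqref{Prmdofs} for $F$ associated to $g$ is the corresponding degree of freedom for $T$ (using $\tr_g\circ\tr_F=\tr_g$ and $\tr_F(\alpha\wedge\beta)=\tr_F\alpha\wedge\tr_F\beta$). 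Granting these, if all the degrees of freedom of $u$ vanish then, for each facet $F$, all degrees of freedom of $\tr_F u\in\P_r^-\Lambda^k(F)$ vanish, so $\tr_F u=0$ by the inductive hypothesis. Hence $u\in\0\P_r^-\Lambda^k(T)$, and the only remaining hypotheses are the interior ones: $\int_T u\wedge q=0$ for $q\in\P_{r+k-n-1}\Lambda^{n-k}(T)$.

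It now remains to show that such a $u$ vanishes, and here is the one new idea. First I would show $du=0$. Indeed $du\in\P_{r-1}\Lambda^{k+1}(T)$, and $\tr_F(du)=d(\tr_F u)=0$ for every facet, so $du\in\0\P_{r-1}\Lambda^{k+1}(T)$; moreover, for any $q'\in\P_{r+k-n}\Lambda^{n-k-1}(T)$, Leibniz, Stokes' theorem, and $\tr_{\partial T}u=0$ give $\int_T du\wedge q'=(-1)^{k+1}\int_T u\wedge dq'$, which vanishes because $dq'\in\P_{r+k-n-1}\Lambda^{n-k}(T)$ is an admissible test form in the interior degrees of freedom of $u$. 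Applying the preceding lemma at form degree $k+1$ (the case $k=n$ being trivial, since then $du\in\P_{r-1}\Lambda^{n+1}=0$) then forces $du=0$. Next, $du=0$ together with the homotopy formula (equivalently, the exactness of \eqref{pmdR} and the identity $d\P_r^-\Lambda^{k-1}=d\P_r\Lambda^{k-1}$) shows that $u$ in fact lies in $\P_{r-1}\Lambda^k(T)$, so $u\in\0\P_{r-1}\Lambda^k(T)$; the preceding lemma now applies directly to the interior degrees of freedom and gives $u=0$. (For $k=0$ the step "$u\in\P_{r-1}\Lambda^k(T)$" is replaced by the remark that a closed $0$-form is constant and, having vanishing trace on $\partial T$, must be zero.)

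The main obstacle I anticipate is the double use of the lemma, and in particular recognizing that the interior degrees of freedom at form degrees $k$ and $k+1$ are linked through integration by parts: this is exactly what produces $du=0$, after which the special structure of $\P_r^-\Lambda^k$—namely that a closed element of it already lies in $\P_{r-1}\Lambda^k$—brings $u$ into the regime where the lemma bites directly. The other step requiring a genuine (if small) argument is the shape-function trace property $\tr_F\P_r^-\Lambda^k(T)\subseteq\P_r^-\Lambda^k(F)$.
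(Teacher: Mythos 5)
Your proposal is correct and follows essentially the same route as the paper's proof: the same dimension count via binomial identities, the same induction on dimension using the trace property of $\P_r^-\Lambda^k$ and of the degrees of freedom, and the same two-stage use of the interior lemma (first on $du$ via Stokes' theorem and the Leibniz rule to get $du=0$, then on $u$ itself after the homotopy formula places $u$ in $\P_{r-1}\Lambda^k$). The only differences are cosmetic: your handling of the edge cases $k=n$ and $k=0$ is slightly more explicit than the paper's.
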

\begin{proof}
 First we do the dimension count.  The number of degrees of freedom is at most
\begin{align*}
\sum_{d\ge k} \#\Delta_d(T) \dim\P_{r+k-d-1}\Lambda^k(\R^d)
&= \sum_{d\ge k} \binom{n+1}{d+1}\binom{r+k-1}{d}\binom{d}{k}\\
&= \sum_{j\ge 0} \binom{n+1}{j+k+1}\binom{r+k-1}{j+k}\binom{j+k}{j}.
\end{align*}
Simplifying with the binomial identities,
$$
\binom{a}{b}\binom{b}{c}=\binom{a}{c}\binom{a-c}{a-b}, \quad
\sum_{j\ge 0} \binom{a}{b+j}\binom{c}{j}=\binom{a+c}{a-b},
$$
the right-hand side becomes
$$
 \binom{r+n}{r+k}\binom{r+k-1}{k} = \dim\P_r^-\Lambda^k(T).
$$

It remains to show that if $u\in \P_r^-\Lambda^k(T)$ and the degrees of freedom in \eqref{Prmdofs}
vanish, then $u$ vanishes.  Since $\tr_f\P_r^-\Lambda^k(T)=\P_r^-\Lambda^k(f)$, we
may use induction on dimension to conclude that $\tr_f u$ vanishes on each facet $f$, so
$u\in\0\P_r^-\Lambda^k(T)$.  Therefore $du\in\0\P_{r-1}\Lambda^{k+1}(T)$.  Moreover,
$$
\int_T du\wedge p = \pm \int_T u\wedge dp =0, \quad p\in\P_{r+k-n}\Lambda^{n-k-1}(T),
$$
where the first equality comes from Stoke's theorem and the Leibniz rule, and the second
from the hypothesis that the degrees of freedom for $u$ vanish.  We may now apply
the lemma (with $k$ replaced by $k+1$) to $du$ to conclude that $du$ vanishes.
But the homotopy formula implies that for $u\in\P_r^-\Lambda^k$ with $du=0$, $u\in\P_{r-1}\Lambda^k$.
Using the interior degrees of freedom from \eqref{Prmdofs}, we may apply the lemma
to $u$, to conclude that $u$ vanishes.
\end{proof}

It is easy to check that the degrees of freedom imply single-valuedness of the traces
of elements of $\P_r^-\Lambda^k(\T_h)$, so that they indeed belong to $H\Lambda^k$.
Moreover, it is easy to see that
the complex \eqref{pmdR} involving the shape functions, leads to a finite element subcomplex
of the $L^2$ de~Rham complex on $\Omega$:
\begin{equation*}
\P_r^-\Lambda^0(\T_h)\xrightarrow{d} \P_r^-\Lambda^1\xrightarrow{d}(\T_h)
\cdots\xrightarrow{d}\P_r^-\Lambda^n(\T_h).
\end{equation*}
Using the degrees of freedom to define projection operators $\pi^k_h$ into
$\P_r^-\Lambda^k(\T_h)$ (the domain of $\pi^k_h$ consists of all continuous $k$-forms in
$H\Lambda^k(\Omega)$), we obtain projections that commute with $d$ (this can be verified
using Stokes' theorem), which is crucial to the analysis of the element via FEEC.

\subsection{The $\P_r\Lambda^k(\T_h)$ family of finite element differential forms}
We may also use the full polynomial space $\P_r\Lambda^k(T)$ as shape functions
for a finite element space.  The corresponding degrees of freedom are
\begin{equation}\label{Prdofs}
u\in\P_r\Lambda^k(T)\mapsto \int_f (\tr_f u)\wedge q, \quad q\in\P_{r+k-d}^-\Lambda^{d-k}(f), \quad
f\in\Delta_d(T), \ d\ge k.
\end{equation}
Note that in this case the degrees of freedom involve $\P_r^-$ spaces, defined
through the Koszul complex.
The analysis of these spaces is very parallel to that of the last subsection,
and we will not carry it out here.  Again, we obtain unisolvence, and a finite element
subcomplex of the de~Rham complex
$$
\P_r\Lambda^0(\T_h) \xrightarrow{d} \P_{r-1}\Lambda^1(\T_h) \xrightarrow{d} 
\cdots \xrightarrow{d} \P_{r-n}\Lambda^n(\T_h),
$$
which admits a commuting projection defined via the degrees of freedom.

\subsection{Historical notes}
In the case $k=0$, the two shape function spaces $\P_r^-\Lambda^k$ and $\P_r\Lambda^k$
coincide, as do the spaces $\P_{r-d-1}\Lambda^{d-k}(f)$ and $\P_{r-d}^-\Lambda^{d-k}(f)$,
$f\in\Delta_d(T)$, entering \eqref{Prmdofs} and \eqref{Prdofs}.  Thus the two
finite element families coincide for $0$-forms, and provide two distinct generalizations
of the Lagrange elements to differential forms of higher degree.

In $n$ dimensions, $n$-forms may be viewed as scalar functions and the space
$H\Lambda^n(\Omega)$ just corresponds to $L^2(\Omega)$.
The finite element subspace $\P_r\Lambda^n(\T_h)$ is simply the space of all
piecewise polynomial functions of degree $r$, with no interelement continuity required.
The space $\P_r^-\Lambda^n(\T_h)$ coincides with $\P_{r-1}\Lambda^n(\T_h)$.

In two dimensions, the remaining spaces $\P_r^-\Lambda^1(\T_h)$ and $\P_r\Lambda^1(\T_h)$
can be identified, via vector proxies, with the Raviart--Thomas spaces \cite{Raviart-Thomas}
and the Brezzi--Douglas--Marini spaces \cite{Brezzi-Douglas-Marini}.  In three dimensions,
the $\P_r^-\Lambda^1(\T_h)$ and $\P_r^-\Lambda^2(\T_h)$ spaces are the finite element
subspaces of $H(\curl,\Omega)$ and $H(\div,\Omega)$, respectively, called
the N\'ed\'elec edge and face elements of the first kind \cite{Nedelec1}.
The spaces  $\P_r\Lambda^1(\T_h)$ and $\P_r\Lambda^2(\T_h)$ are the
N\'ed\'elec edge and face elements of the second kind \cite{Nedelec2}.
Diagrams for the two-dimensional and three-dimensional elements are shown in Figures~\ref{fg:simp2d}
and Figures~\ref{fg:simp3d}.

\begin{figure}[p]
\begin{center}
\begin{tabular}{cccc}
$\P_r^-\Lambda^k$: & $k=0$ & $k=1$ & $k=2$
\\[.15in]
\raise.5in\hbox{$r=1$}
  & \includegraphics[width=1.25in]{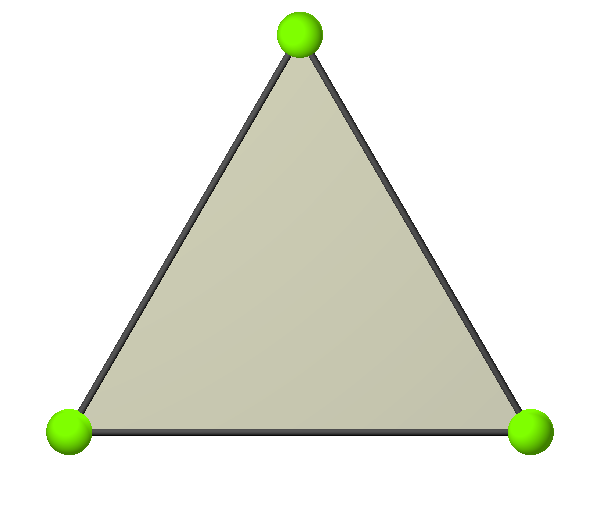}
  & \includegraphics[width=1.25in]{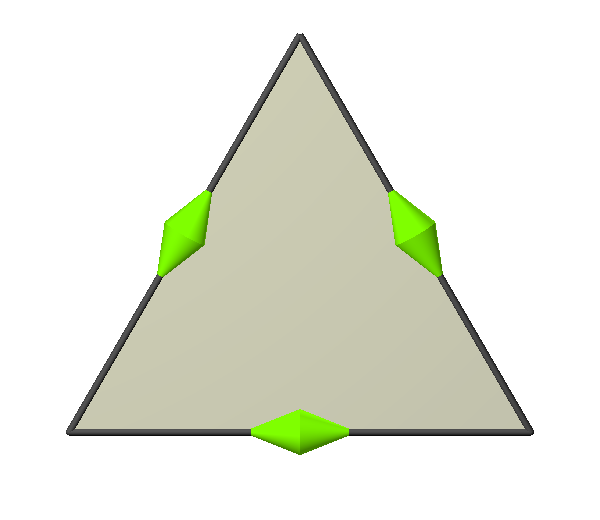}
  & \includegraphics[width=1.25in]{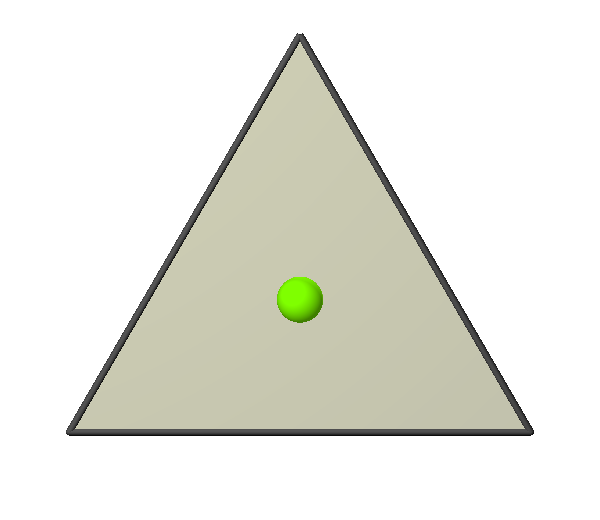}
\\
\raise.5in\hbox{$r=2$}
  & \includegraphics[width=1.25in]{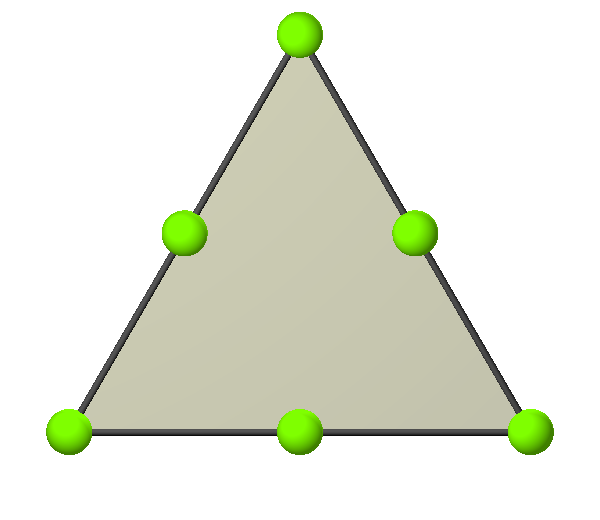}
  & \includegraphics[width=1.25in]{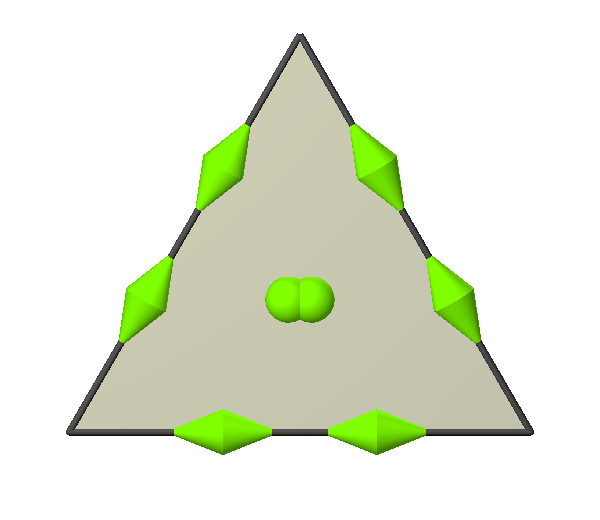}
  & \includegraphics[width=1.25in]{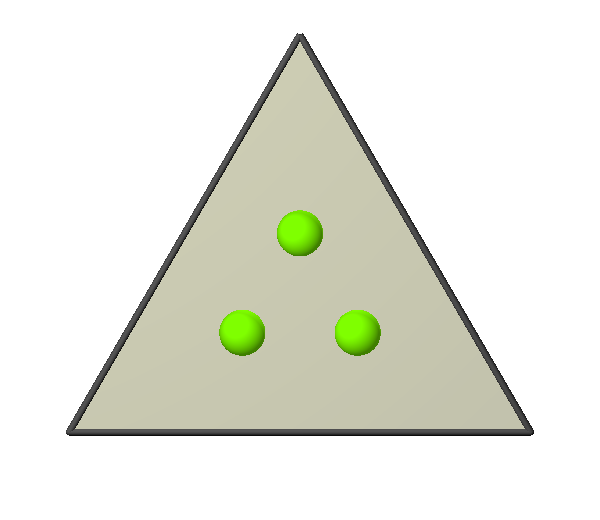}
\\
\raise.5in\hbox{$r=3$}
  & \includegraphics[width=1.25in]{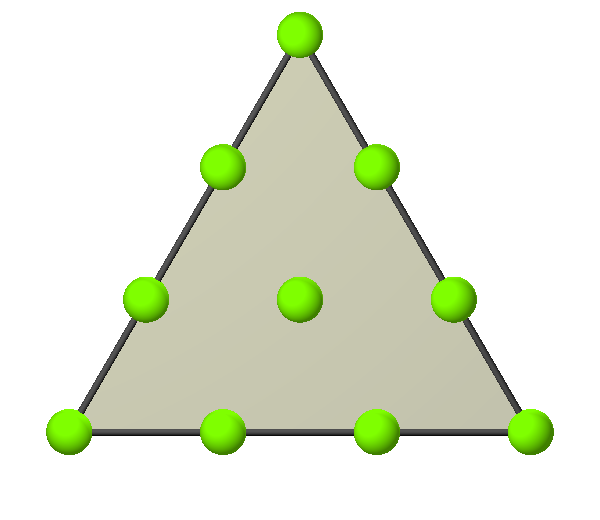}
  & \includegraphics[width=1.25in]{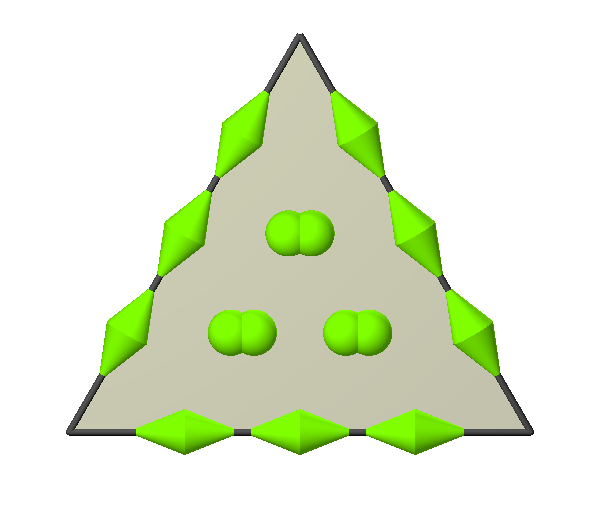}
  & \includegraphics[width=1.25in]{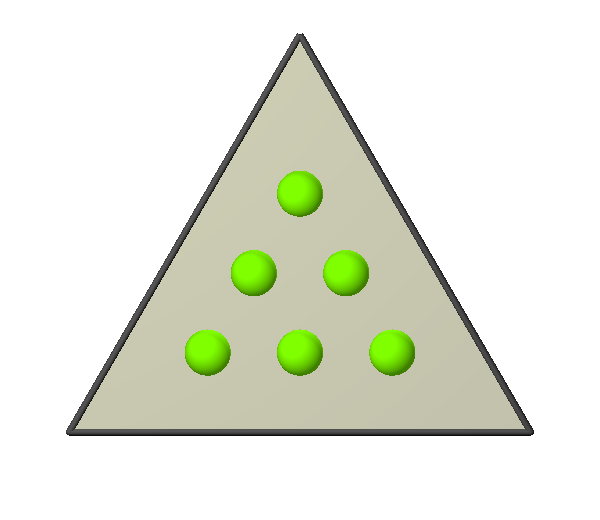}
\\[.25in]
$\P_r\Lambda^k$: & $k=0$ & $k=1$ & $k=2$
\\[.15in]
\raise.5in\hbox{$r=1$}
  & \includegraphics[width=1.25in]{element_diagrams/simplicial-elts/PminusLambda/lagrange2d/lagrange2d1.png}
  & \includegraphics[width=1.25in]{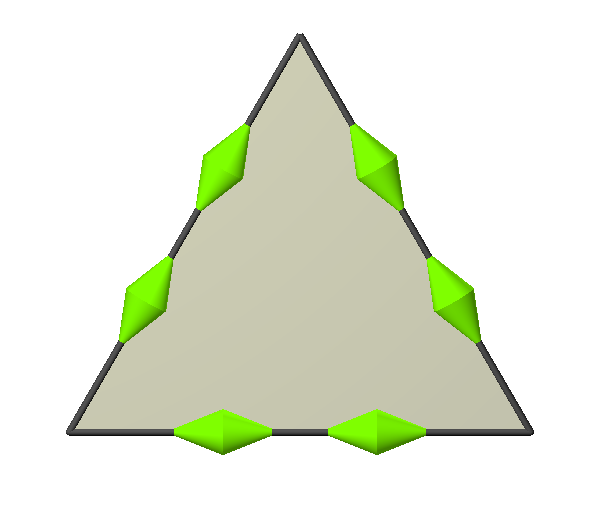}
  & \includegraphics[width=1.25in]{element_diagrams/simplicial-elts/PminusLambda/dg2d/dg2d1.png}
\\
\raise.5in\hbox{$r=2$}
  & \includegraphics[width=1.25in]{element_diagrams/simplicial-elts/PminusLambda/lagrange2d/lagrange2d2.png}
  & \includegraphics[width=1.25in]{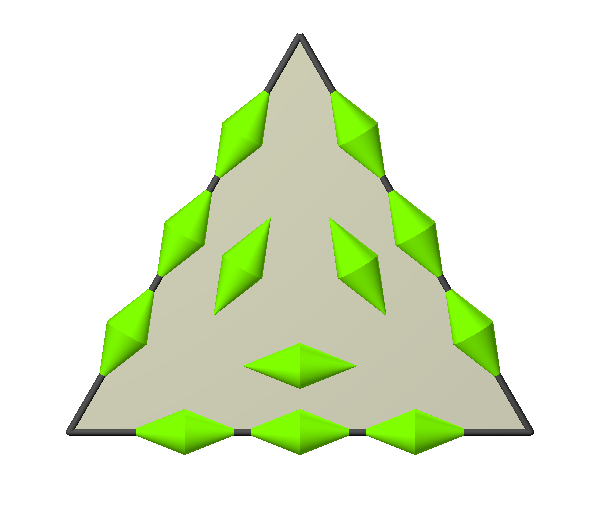}
  & \includegraphics[width=1.25in]{element_diagrams/simplicial-elts/PminusLambda/dg2d/dg2d2.png}
\\
\raise.5in\hbox{$r=3$}
  & \includegraphics[width=1.25in]{element_diagrams/simplicial-elts/PminusLambda/lagrange2d/lagrange2d3.png}
  & \includegraphics[width=1.25in]{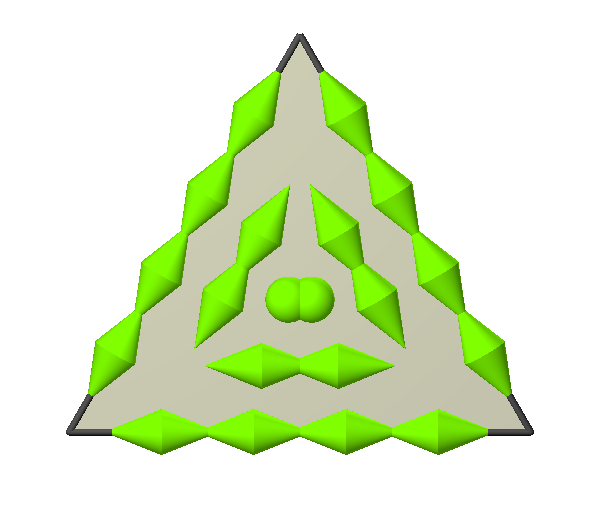}
  & \includegraphics[width=1.25in]{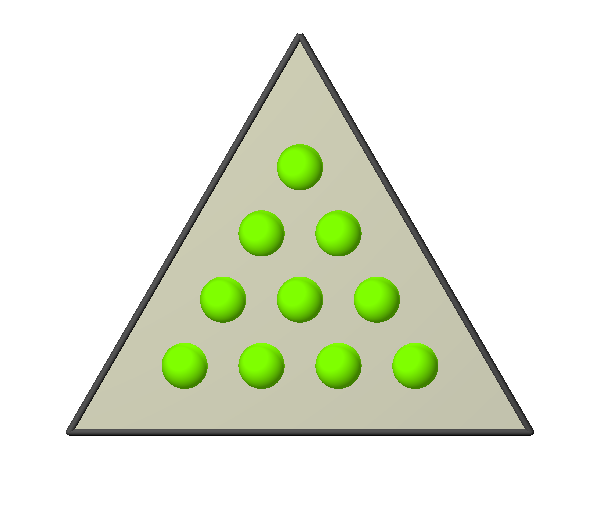}
 \end{tabular}
\end{center}
Legend:
\raise-1pt\hbox{\includegraphics[height=.13in]{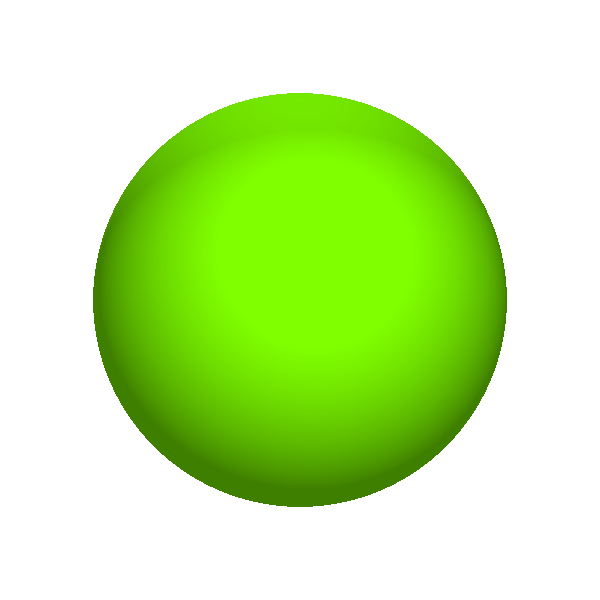}}~--~scalar (1 DOF);
\raise-4pt\hbox{\includegraphics[height=.187in]{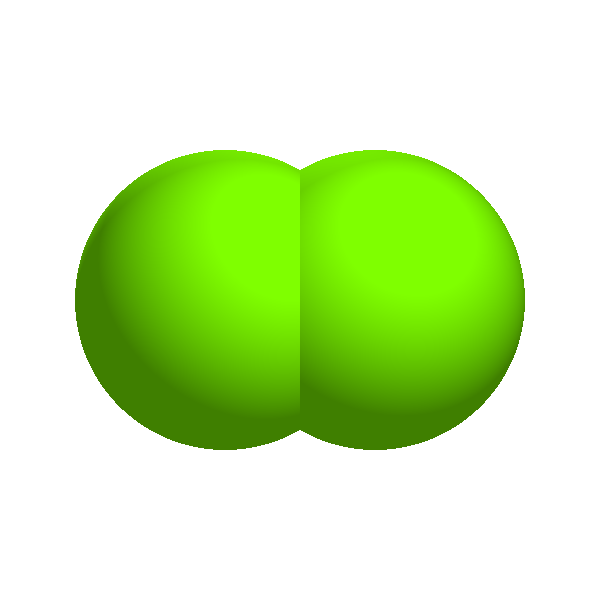}}~--~vector field (2 DOFs);\\\leavevmode
\raise-5pt\hbox{\includegraphics[height=.245in]{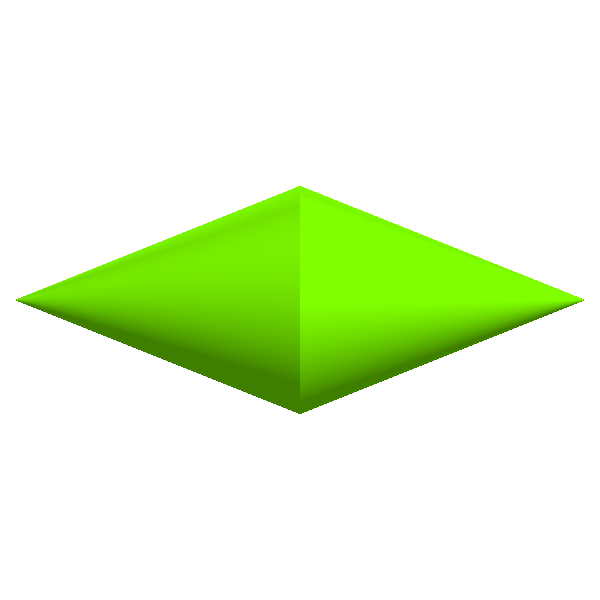}}~--~tangential component along edge (1 DOF).
\caption{The $\P_r^-\Lambda^k(\T_h)$ and $\P_r\Lambda^k(\T_h)$ spaces in two dimensions.}\label{fg:simp2d}
\end{figure}

\begin{figure}[p]
\begin{center}
\begin{tabular}{ccccc}
$\P_r^-\Lambda^k$: & $k=0$ & $k=1$ & $k=2$ & $k=3$
\\[.15in]
\raise.5in\hbox{$r=1$}
  & \includegraphics[width=1.25in]{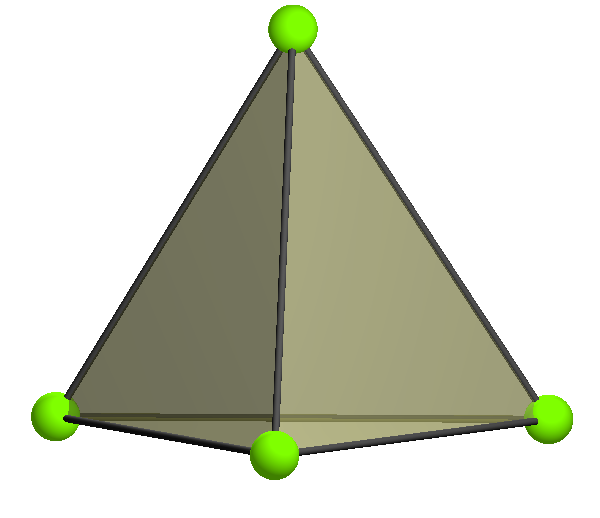}
  & \includegraphics[width=1.25in]{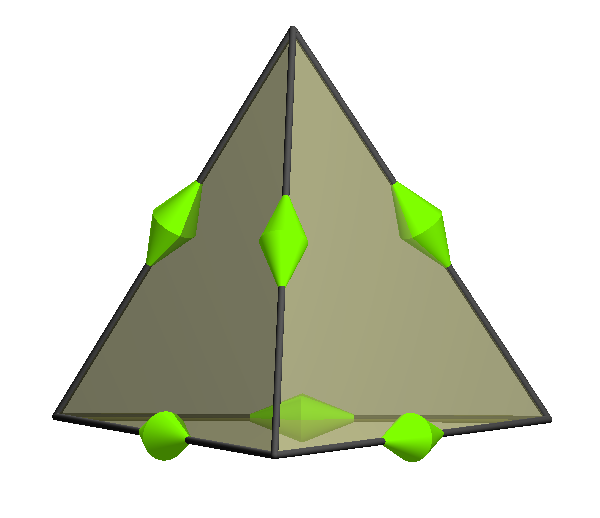}
  & \includegraphics[width=1.25in]{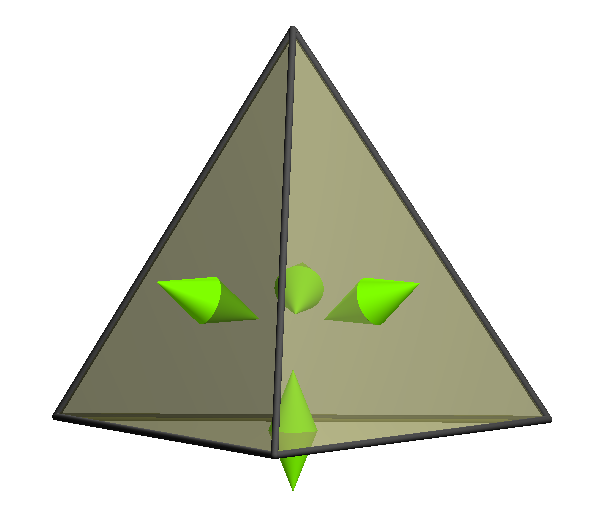}
  & \includegraphics[width=1.25in]{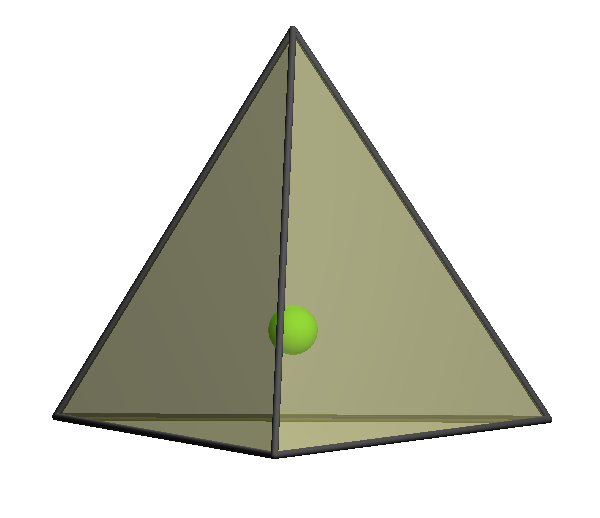}
\\
\raise.5in\hbox{$r=2$}
  & \includegraphics[width=1.25in]{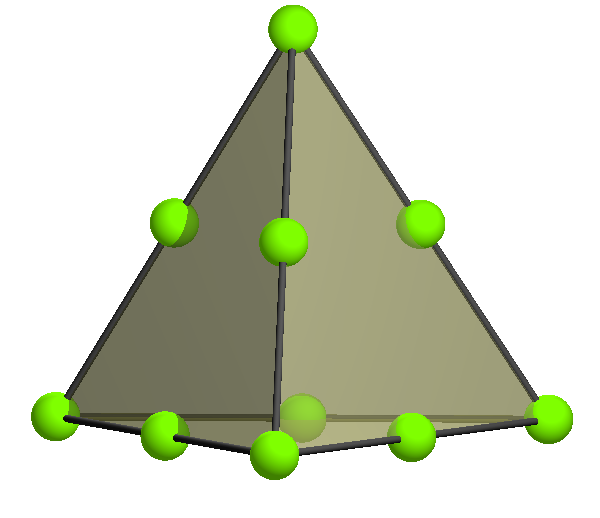}
  & \includegraphics[width=1.25in]{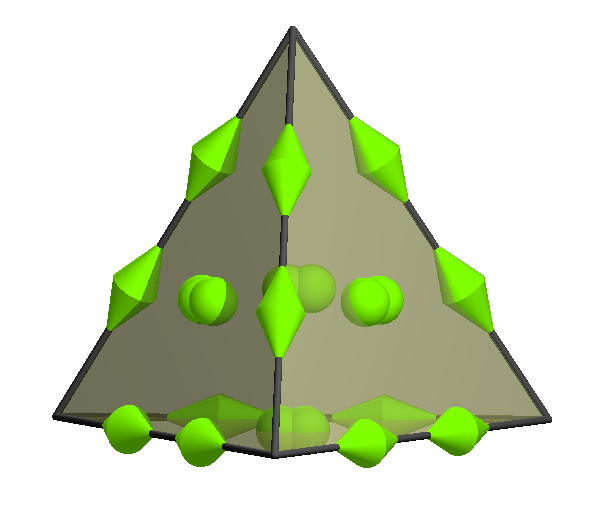}
  & \includegraphics[width=1.25in]{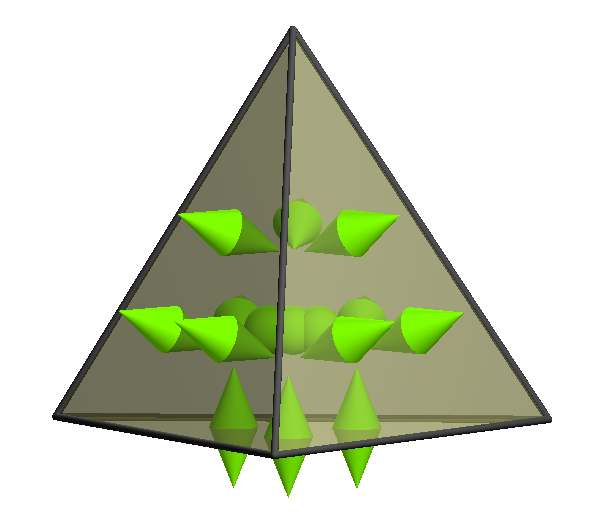}
  & \includegraphics[width=1.25in]{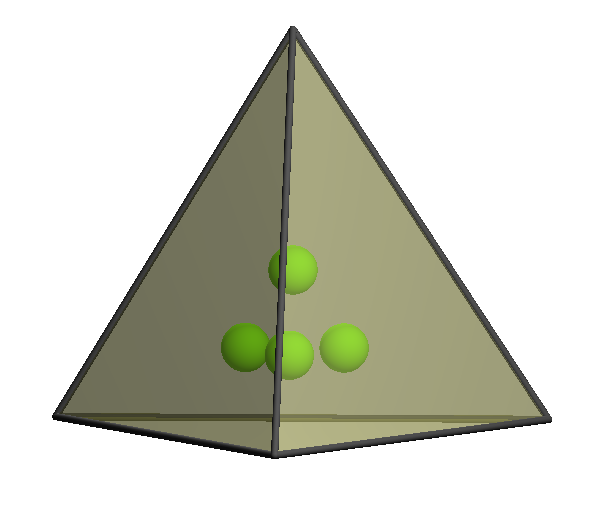}
\\
\raise.5in\hbox{$r=3$}
  & \includegraphics[width=1.25in]{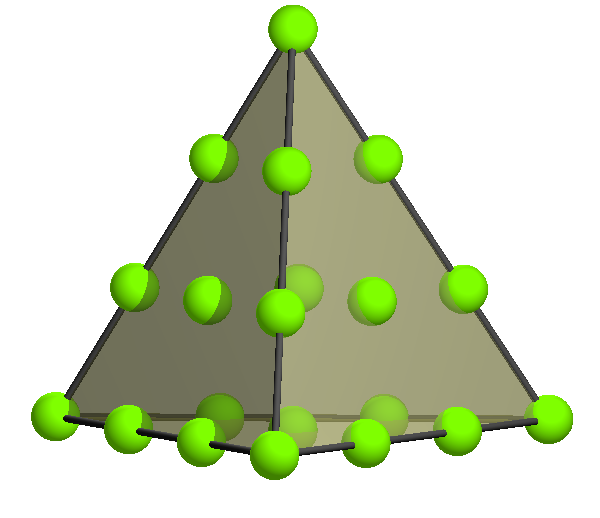}
  & \includegraphics[width=1.25in]{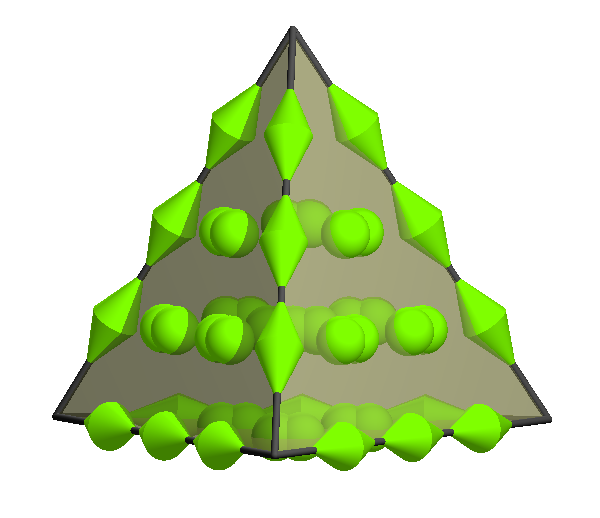}
  & \includegraphics[width=1.25in]{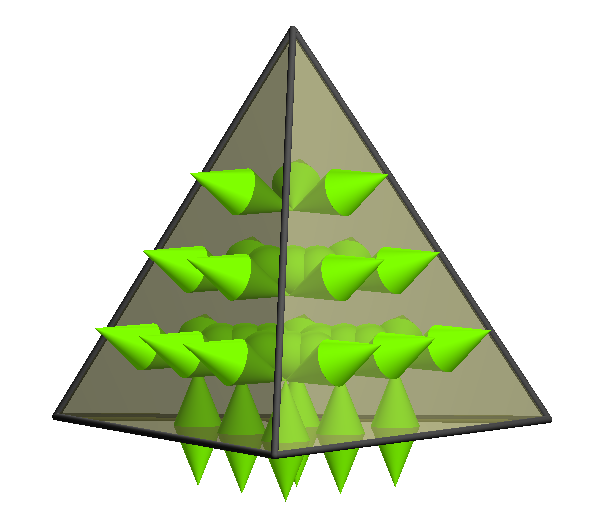}
  & \includegraphics[width=1.25in]{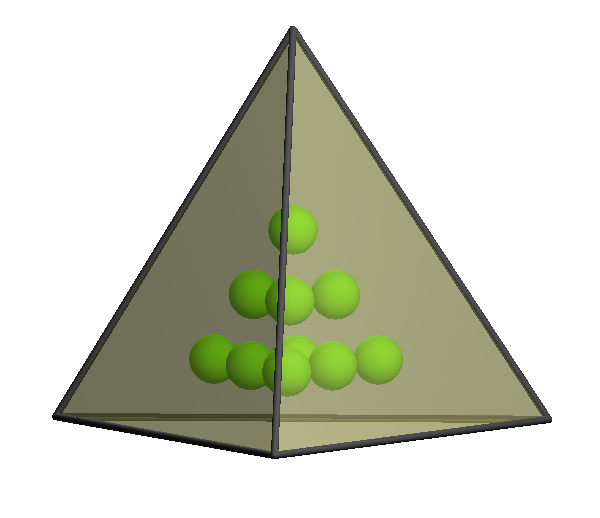}
\\[.25in]
$\P_r\Lambda^k$: & $k=0$ & $k=1$ & $k=2$ & $k=3$
\\[.15in]
\raise.5in\hbox{$r=1$}
  & \includegraphics[width=1.25in]{element_diagrams/simplicial-elts/PminusLambda/lagrange3d/lagrange3d1.png}
  & \includegraphics[width=1.25in]{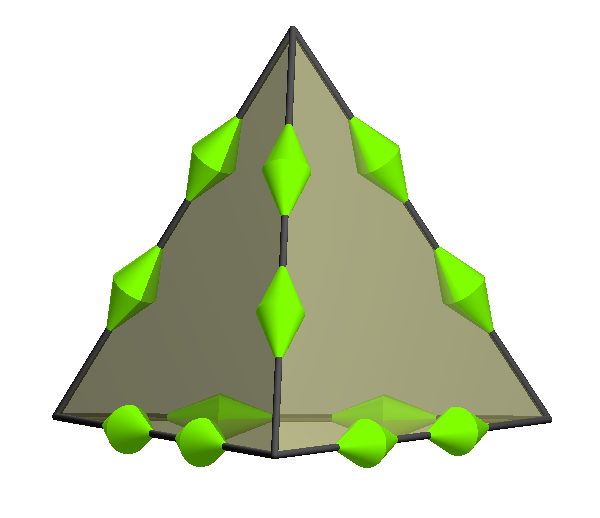}
  & \includegraphics[width=1.25in]{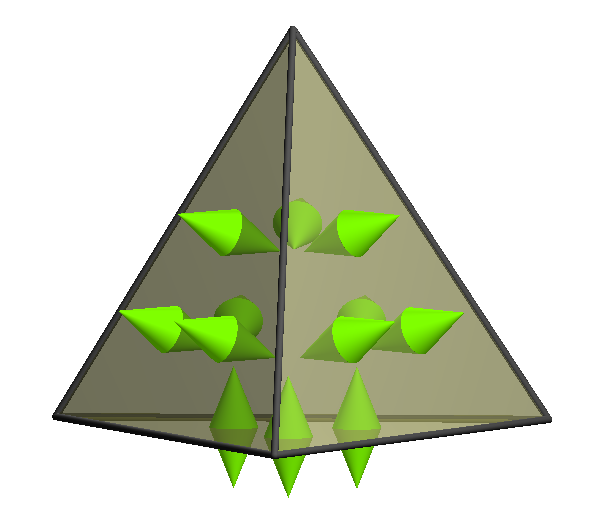}
  & \includegraphics[width=1.25in]{element_diagrams/simplicial-elts/PminusLambda/dg3d/dg3d1.png}
\\
\raise.5in\hbox{$r=2$}
  & \includegraphics[width=1.25in]{element_diagrams/simplicial-elts/PminusLambda/lagrange3d/lagrange3d2.png}
  & \includegraphics[width=1.25in]{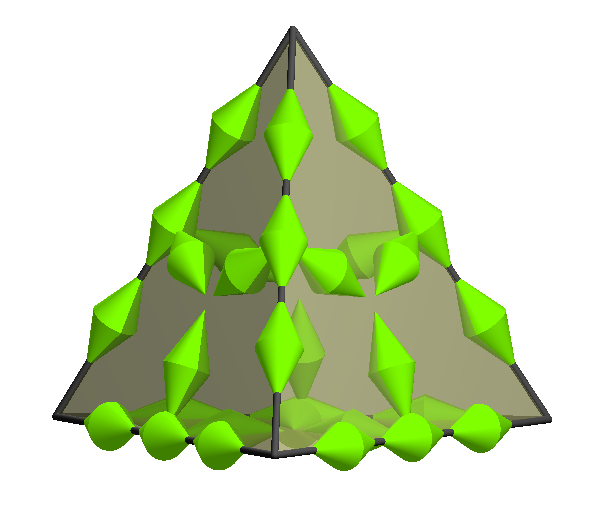}
  & \includegraphics[width=1.25in]{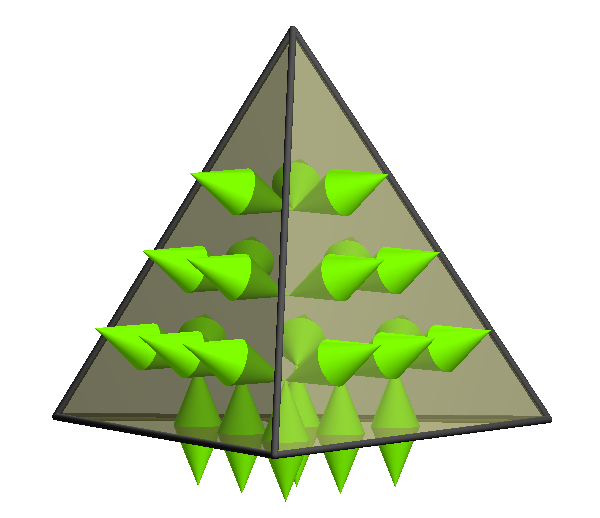}
  & \includegraphics[width=1.25in]{element_diagrams/simplicial-elts/PminusLambda/dg3d/dg3d2.png}
\\
\raise.5in\hbox{$r=3$}
  & \includegraphics[width=1.25in]{element_diagrams/simplicial-elts/PminusLambda/lagrange3d/lagrange3d3.png}
  & \includegraphics[width=1.25in]{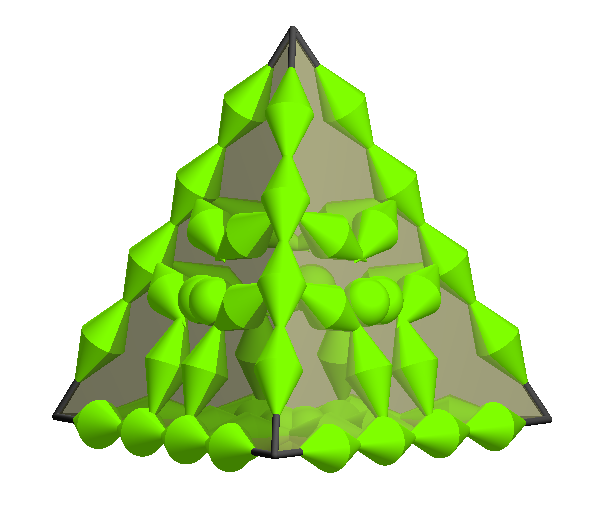}
  & \includegraphics[width=1.25in]{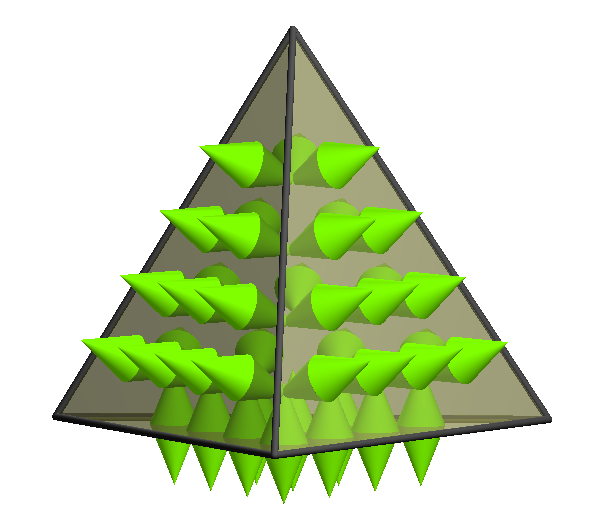}
  & \includegraphics[width=1.25in]{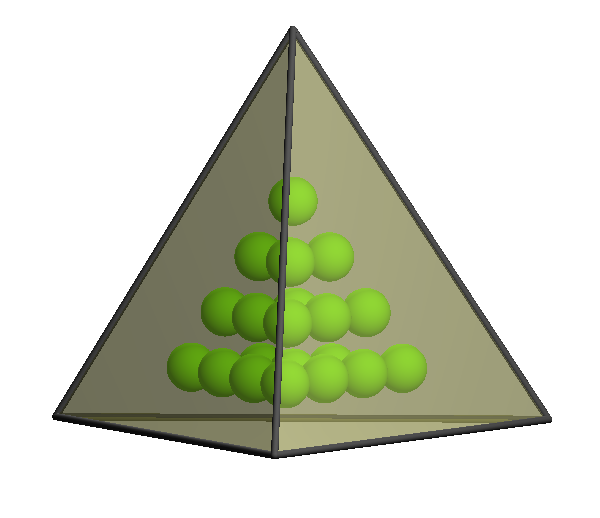}
 \end{tabular}
\end{center}
Legend:
\raise-1pt\hbox{\includegraphics[height=.13in]{figures/symbols/point.png}}~--~scalar (1 DOF);
\raise-4pt\hbox{\includegraphics[height=.187in]{figures/symbols/doublepoint.png}}~--~tangential vector field (2 DOFs);
\raise-4pt\hbox{\includegraphics[height=.187in]{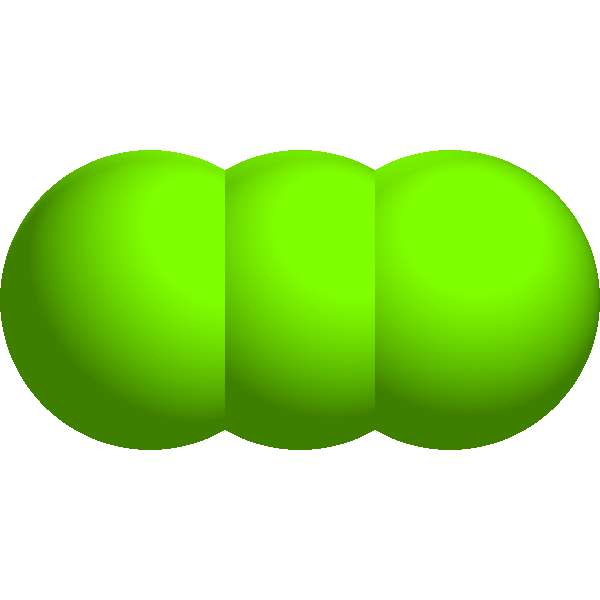}}~--~vector field (2 DOFs);
\raise-5pt\hbox{\includegraphics[height=.245in]{figures/symbols/arrow.png}}~--~tangential component along edge or
normal component on face (1 DOF).
\caption{The $\P_r^-\Lambda^k(\T_h)$ and $\P_r\Lambda^k(\T_h)$ spaces in three dimensions.}\label{fg:simp3d}
\end{figure}

The lowest order spaces $\P_1^-\Lambda^k(\T_h)$ are very geometric, possessing precisely one degree of freedom per face of
dimension $k$, and no others (see the top rows of Figures~\ref{fg:simp2d} and \ref{fg:simp3d}.  In fact these spaces first
appeared in the geometry literature in the work of Whitney in 1957 \cite{Whitney} long before their first appearance
as finite elements.  In the 1970s, they were used by Dodziuk \cite{dodziuk} and Dodziuk and Patodi
\cite{dodziuk-patodi} as a theoretical tool to approximate the
eigenvalues of the Hodge Laplacian on a Riemannian
manifold.  This then played an essential role in M\"uller's proof of the Ray--Singer conjecture \cite{muller}.
The spaces $\P_r\Lambda^k(\T_h)$ also appeared in the geometry literature, introduced by Sullivan \cite{sullivan73,sullivan77}.
In an early, largely overlooked paper bringing finite element analysis techniques to bear on geometry
Baker \cite{baker} named these Sullivan--Whitney forms, and analyzed their convergence for the
eigenvalue problem for the Hodge Laplacian. In 1988 Bossavit made the connection between
Whitney forms and the mixed finite elements in use in electromagnetics \cite{Bossavit}, in part inspired
by the thesis of Kotiuga \cite{kotiuga-thesis}.  The first unified treatment of the $\P_r^-\Lambda^k$ spaces, which
was based on exterior calculus and included a unisolvence proof, was in a seminal paper of Hiptmair \cite{Hiptmair}
in 1999.  In the 2006 paper of Arnold, Falk, and Winther \cite{acta}, in which the term finite element exterior calculus
first appeared, the Koszul complex was first applied to finite elements, simplifying many aspects and resulting
in a simultaneous treatment of both the $\P_r^-\Lambda^k$ and $\P_r\Lambda^k$ spaces.

\section{Families of finite element differential forms on cubical meshes}
We now describe two families of spaces of finite element differential forms,
which we denote $\Q_r^-\Lambda^k(\T_h)$ and $\S_r\Lambda^k(\T_h)$,
defined for cubical meshes $\T_h$, i.e., meshes in which each element is the
Cartesian product of intervals.  In some sense, the $\Q_r^-\Lambda^k$ family
can be seen as an analogue of the $\P_r^-\Lambda^k$ family for simplicial
meshes, and the $\S_r\Lambda^k$ family an analogue of the $\P_r\Lambda^k$
family.  The $\Q_r^-\Lambda^k$ family can be constructed from the
one-dimensional case by a tensor product construction, and is long known.
By contrast, the $\S_r\Lambda^k$ family first appeared in recent work
of Arnold and Awanou \cite{cubicderham}.  Even in two and three dimensions,
the spaces in this family were for the most part not known previously.

\subsection{The $\Q_r^-\Lambda^k$ family}
We describe this family only very briefly.  A more detailed description will
be included in a forthcoming study of the approximation properties of these
spaces under non-affine mappings \cite{arnold-boffi-bonizzoni}. Suppose we are given a subcomplex of the de~Rham complex on
an element $S\subset\R^m$ and a second such subcomplex on an element $T\subset\R^n$:
$$
V^0(S)\xrightarrow{d} V^1(S)\xrightarrow{d}\cdots\xrightarrow{d} V^m(S), \quad
V^0(T)\xrightarrow{d} V^1(T)\xrightarrow{d}\cdots\xrightarrow{d} V^n(T).
$$
We may then construct a subcomplex of the de~Rham complex on $S\x T$ by a tensor
product construction which is known in the theory of differential forms;
see, e.g., \cite[p.~61]{robbin-salamon}.
The canonical projection $\pi_S:S\x T\to S$ determines a pullback of
$i$-forms on $S$ to $i$-forms on $S\x T$, so $\pi^*_S V^i(S)$ is a space of $i$-forms on
$S\x T$ and, similarly, $\pi^*_T V^j(T)$ is a space of $j$-forms on $S\x T$.
Thus we may define a space of $k$-forms on $S\x T$ by 
$$
V^k(S\x T) = \bigoplus_{i+j=k}\pi^*_SV^i(S)\wedge \pi^*_T V^j(T).
$$
We take the space $V^k(S\x T)$ as
the shape functions for $k$-forms on $S\x T$.  The construction
of degrees of freedom for $V^k(S\x T)$ is simple.  If $\eta\in V^i(S)^*$ is a degree
of freedom associated to a face $f$ of $S$, and $\rho\in V^j(T)^*$
is associated to a face $g$ of $T$, we define
$$
\eta\wedge\rho\in [\pi^*_SV^i(S)\wedge \pi^*_T V^j(T)]^*\subset V^k(S\x T)^*,
$$
by
$$
(\eta\wedge\rho)(\pi^*_s u \wedge \pi^*_T v)  = \eta(u)\rho(v),
$$
and associate the degree of freedom $\eta\wedge\rho$ to $f\x g$, which is a face of $S\x T$.

The $\Q_r^-$ family is defined by applying this tensor product repeatedly, starting
with a finite element de~Rham complex on an interval in one dimension.  In one dimension
the $\P_r^-$ and $\P_r$ de~Rham subcomplexes coincide.  On an interval $I$, the
shape functions for $0$-forms are $V^0(I)=\P_r(I)$ with degrees of freedom at each end point,
and moments of degree at most $r-1$ in the interior.  The shape function for $1$-forms
are $V^1(I)=\P_{r-1}(I)$ with all degrees of freedom in the interior.   Repeatedly using the
tensor product construction just outlined, we obtain polynomial spaces and degrees of freedom
on a box $I_1\x\cdots\x I_n\subset\R^n$.  We denote the shape function space so obtained
by $\Q_r^-\Lambda^k(I_1\x\cdots\x I_n)$. In $n=2$ dimensions, for example,
\begin{gather*}
\Q_r^-\Lambda^0(I_1\x I_2) = \Q_r(I_1\x I_2)=\P_r(I_1)\otimes\P_r(I_2),\\
\Q_r^-\Lambda^1(I_1\x I_2) = [\P_{r-1}(I_1)\otimes\P_r(I_2)]\x [\P_r(I_1)\otimes\P_{r-1}(I_2)],\quad
\Q_r^-\Lambda^2(I_1\x I_2)=\Q_{r-1}(I_1\x I_2).
\end{gather*}
Diagrams for these elements in two and three dimensions are shown in Figures~\ref{fg:q}.
The space $\Q_r^-\Lambda^0(\T_h)$ is the standard $\Q_r$ finite element subspace of $H^1(\Omega)$
and the space $\Q_r^-\Lambda^n(\T_h)$ is the discontinuous $\Q_{r-1}$ subspace of $L^2(\Omega)$.
The space $\Q_r^-\Lambda^1(\T_h)$ goes back to Raviart and Thomas \cite{Raviart-Thomas} in two
dimensions, and the $\Q_r^-\Lambda^1(\T_h)$ and $\Q_r^-\Lambda^2(\T_h)$ were given by
N\'ed\'elec in \cite{Nedelec1}.  The spaces with $r$ held fixed combine to create a finite element de~Rham subcomplex,
$$
\Q_r^-\Lambda^0(\T_h)\xrightarrow{d}  \Q_r^-\Lambda^1(\T_h)\xrightarrow{d} \cdots \xrightarrow{d}  \Q_r^-\Lambda^n(\T_h),
$$
and the degrees of freedom determine commuting projections.

\begin{figure}[p]
\begin{center}
\begin{tabular}{cccc}
\hspace{-1.35in}$\Q_r^-\Lambda^k$ (2D): & $k=0$ & $k=1$ & $k=2$
\\[.15in]
\raise.5in\hbox{$r=1$}
  & \includegraphics[width=1.2in]{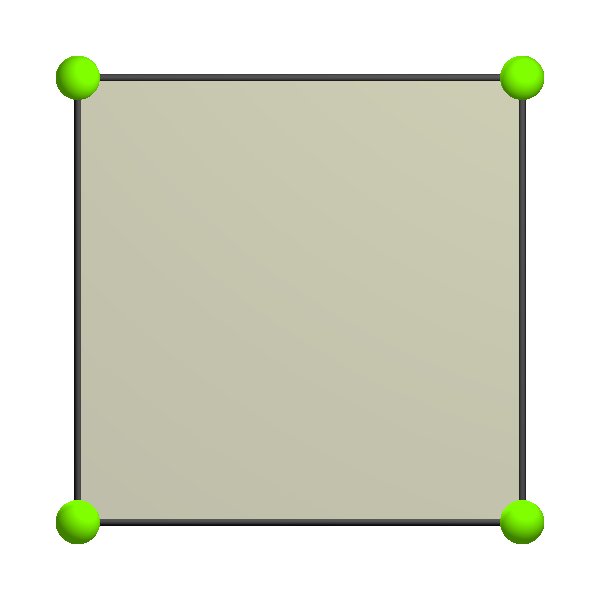}
  & \includegraphics[width=1.2in]{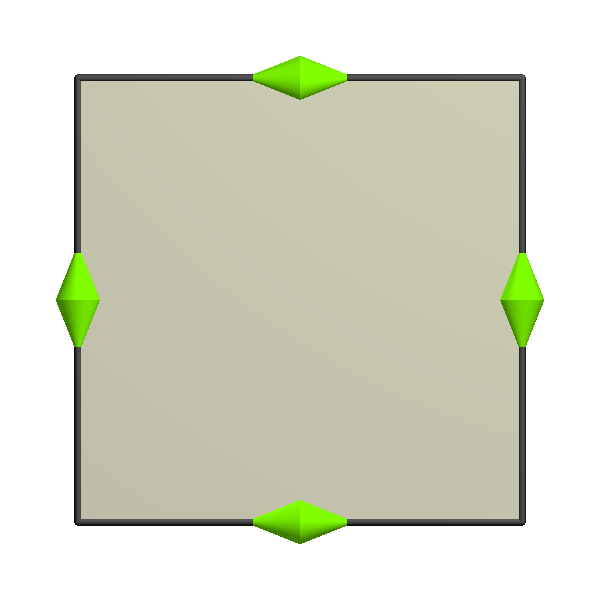}
  & \includegraphics[width=1.2in]{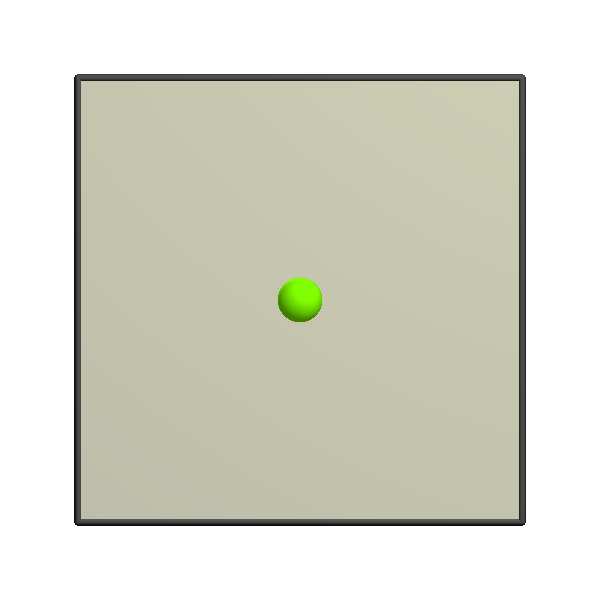}
\\
\raise.5in\hbox{$r=2$}
  & \includegraphics[width=1.2in]{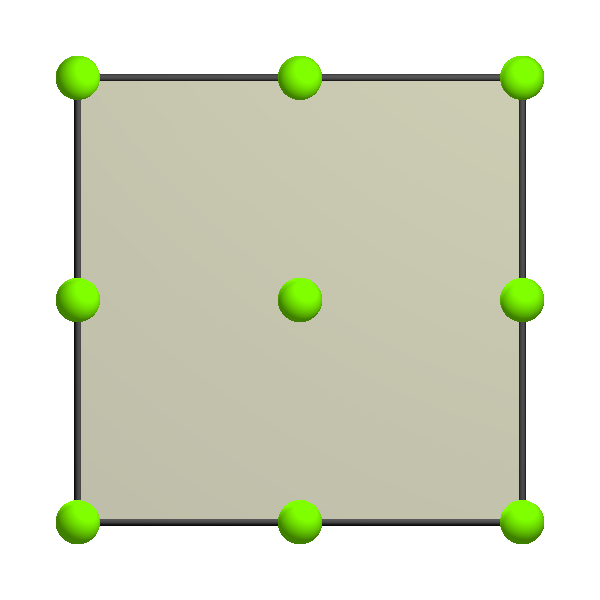}
  & \includegraphics[width=1.2in]{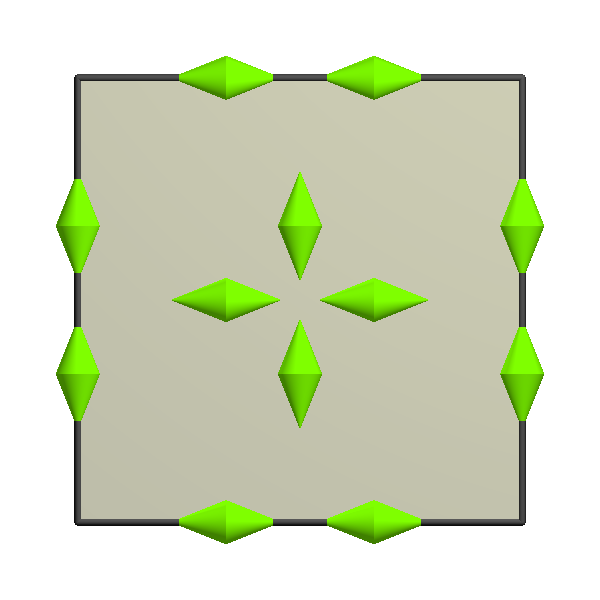}
  & \includegraphics[width=1.2in]{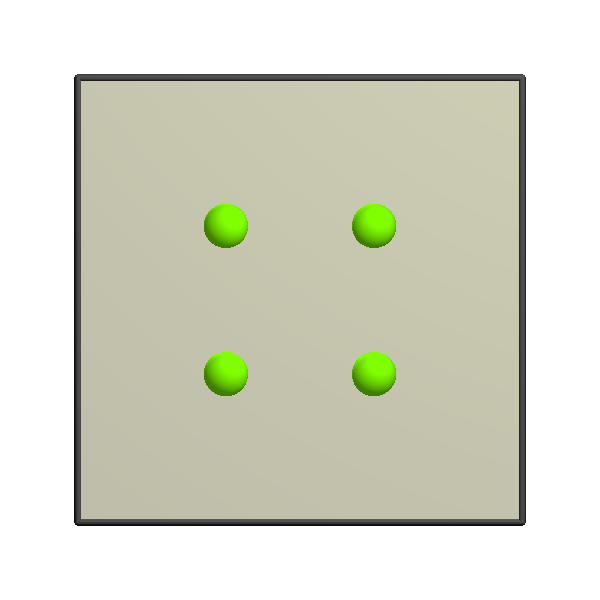}
\\
\raise.5in\hbox{$r=3$}
  & \includegraphics[width=1.2in]{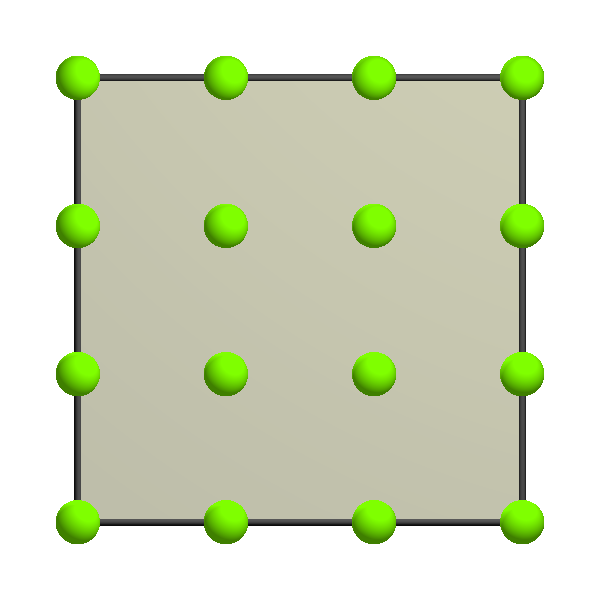}
  & \includegraphics[width=1.2in]{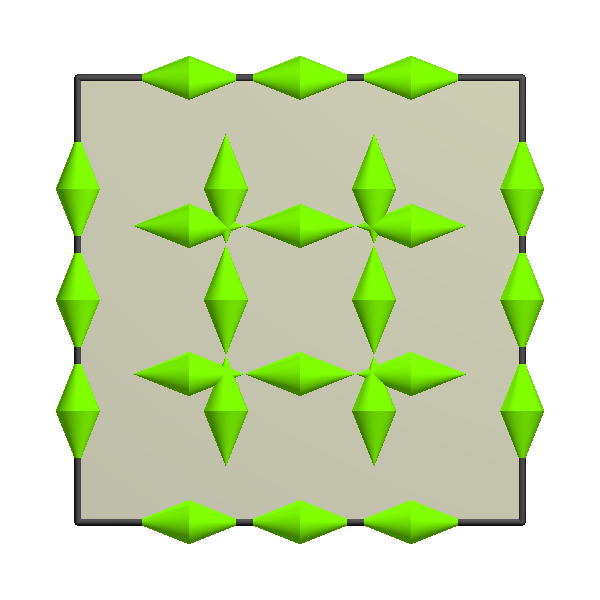}
  & \includegraphics[width=1.2in]{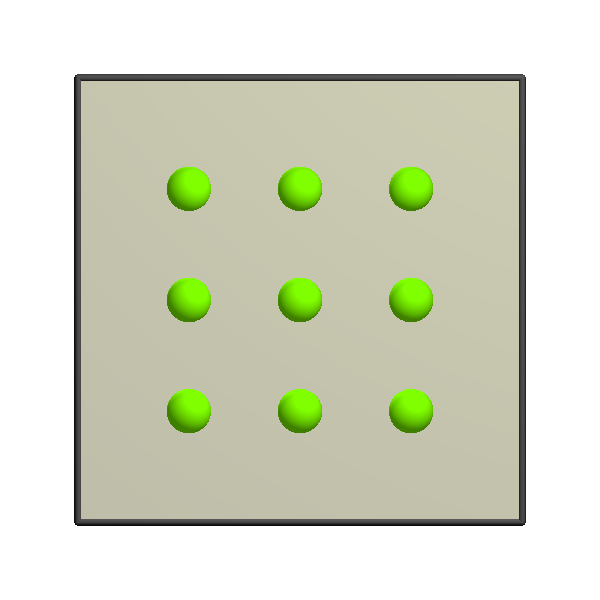}
 \end{tabular}
\begin{tabular}{ccccc}
$\Q_r^-\Lambda^k$ (3D): & $k=0$ & $k=1$ & $k=2$ & $k=3$
\\[.15in]
\raise.5in\hbox{$r=1$}
  & \includegraphics[width=1.2in]{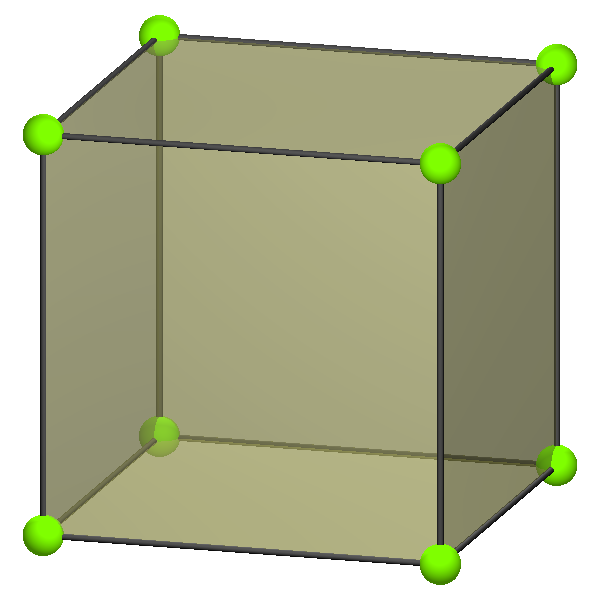}
  & \includegraphics[width=1.2in]{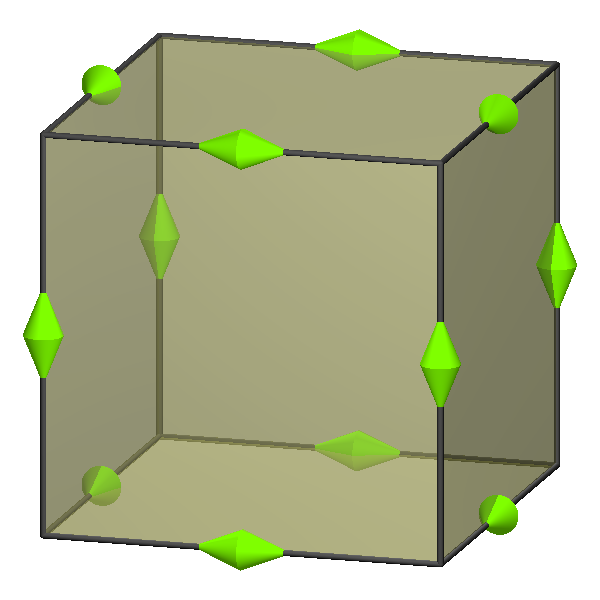}
  & \includegraphics[width=1.2in]{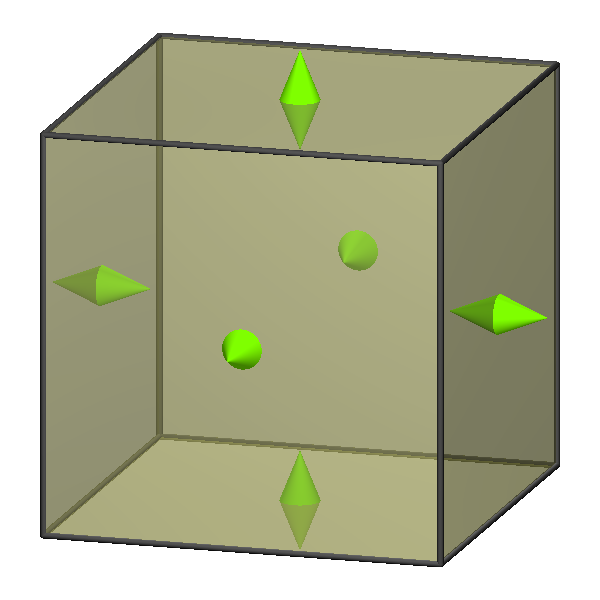}
  & \includegraphics[width=1.2in]{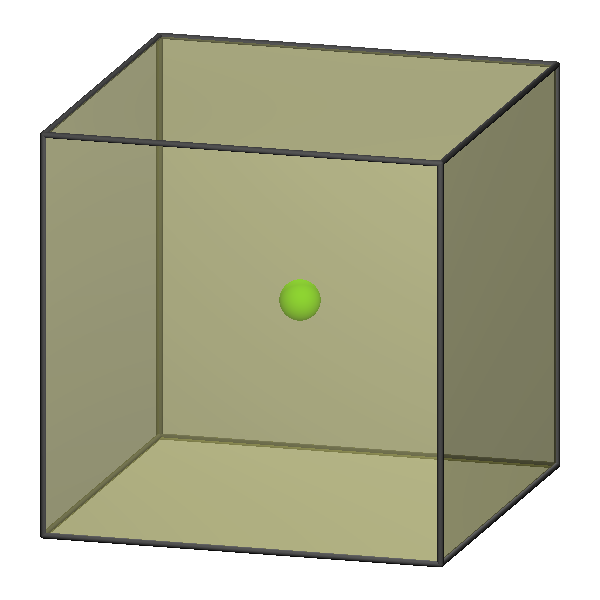}
\\
\raise.5in\hbox{$r=2$}
  & \includegraphics[width=1.2in]{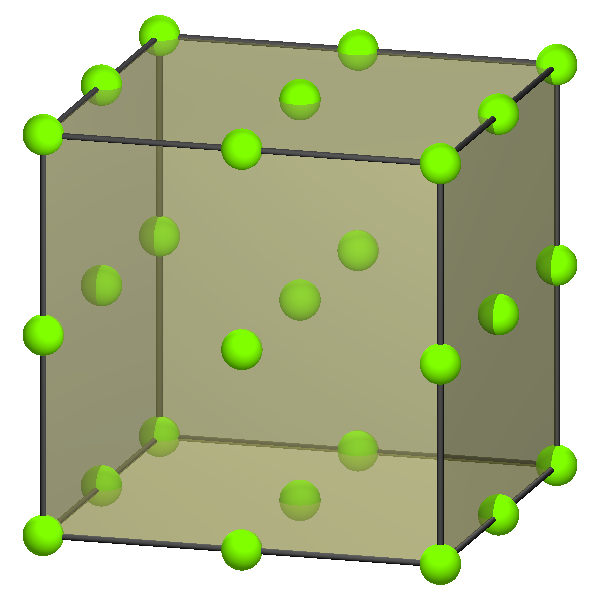}
  & \includegraphics[width=1.2in]{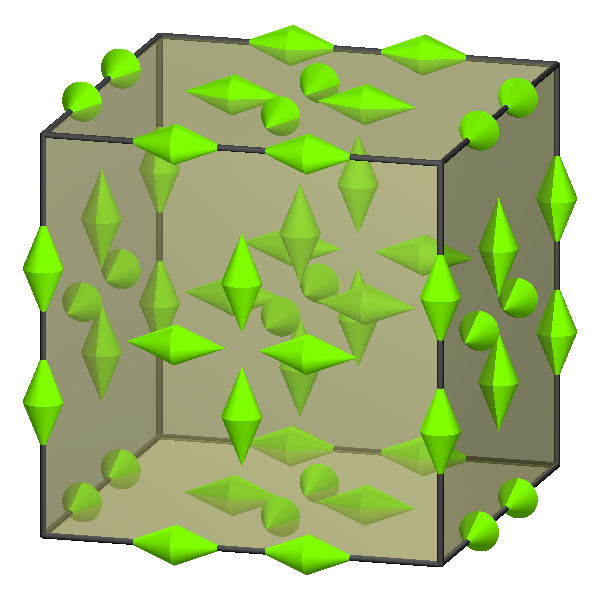}
  & \includegraphics[width=1.2in]{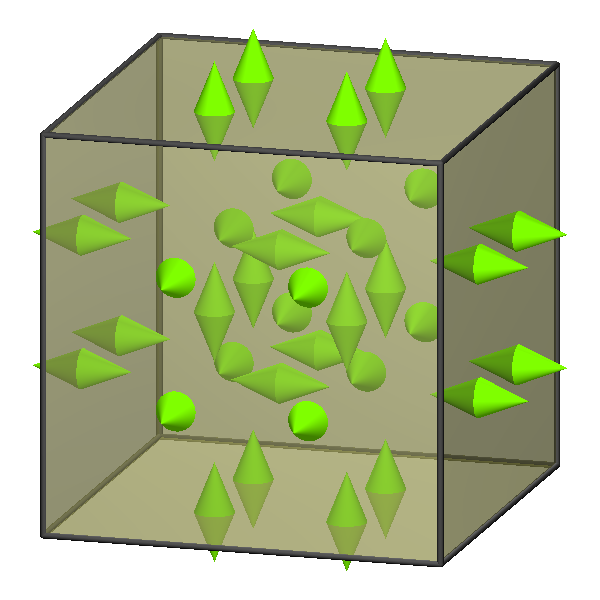}
  & \includegraphics[width=1.2in]{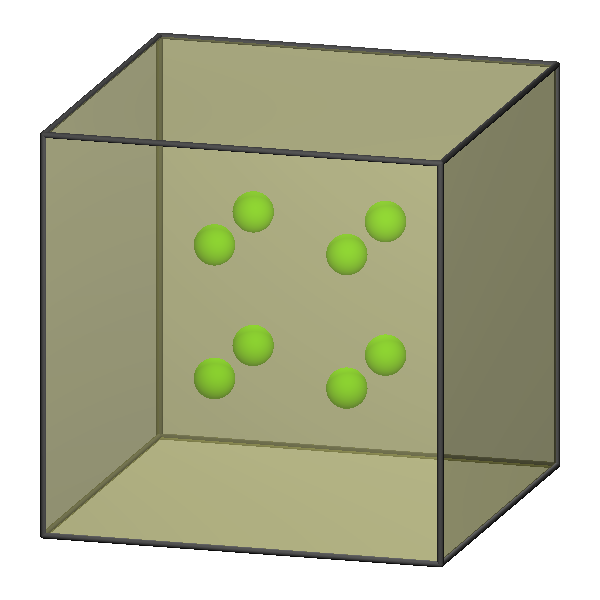}
\\
\raise.5in\hbox{$r=3$}
  & \includegraphics[width=1.2in]{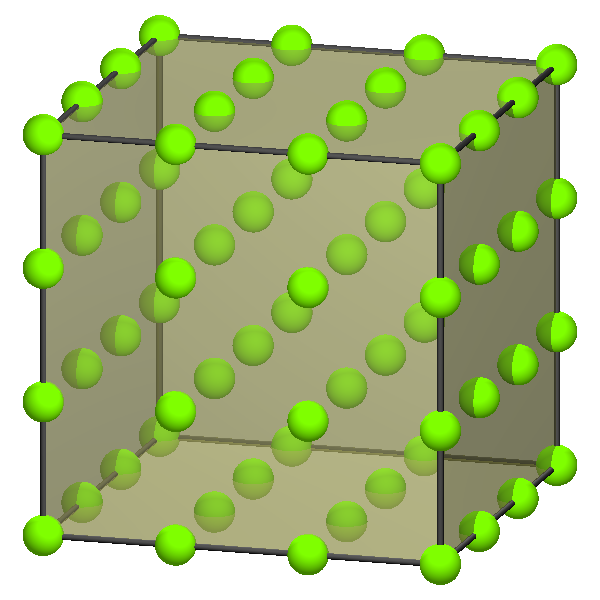}
  & \includegraphics[width=1.2in]{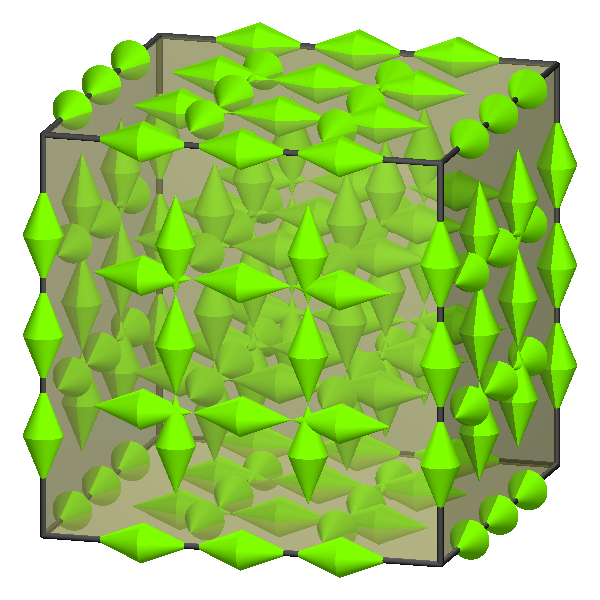}
  & \includegraphics[width=1.2in]{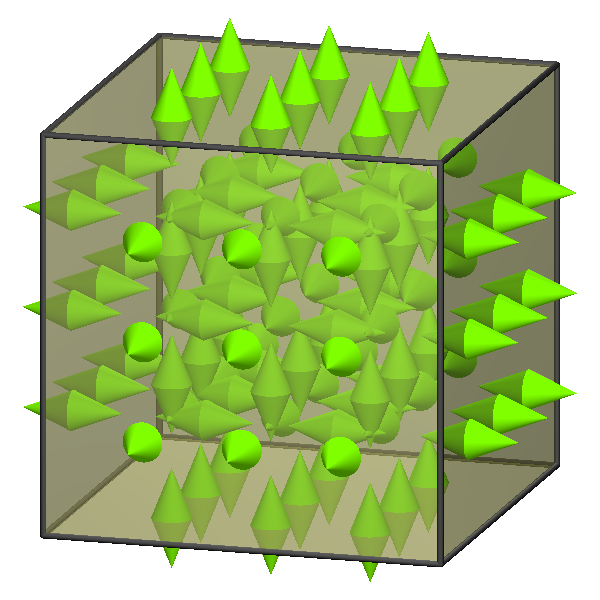}
  & \includegraphics[width=1.2in]{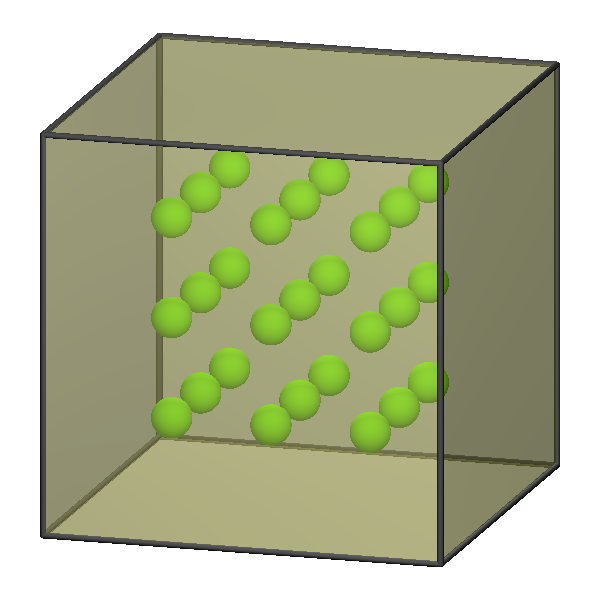}
 \end{tabular}
\end{center}
\caption{The $\Q_r^-\Lambda^k(\T_h)$ spaces in two and three dimensions.}\label{fg:q}
\end{figure}

Recently, Cockburn and Qiu \cite{cockburn-qiu} have published a different family of finite element spaces
in two and three dimensions, that seems to be related to these.
They begin with the complex formed by the full spaces $\Q_r\Lambda^k$, which lie between
$\Q_r^-\Lambda^k$ and $\Q_{r+1}^-\Lambda^k$.  That complex (which was discussed in \cite{Nedelec2})
does \emph{not} admit commuting projections.  Cockburn and Qiu define a small space of bubble functions
that can be added to each of the spaces
so that the resulting spaces remain inside $\Q_{r+1}^-\Lambda^k$ but also form a de~Rham
subcomplex (with constant $r$) which admits commuting projections.

\subsection{A second family of finite element differential forms on cubes}
The $\S_r\Lambda^k$ family presented in this section was derived recently in \cite{cubicderham}.  It seems to
be complementary to the $\Q_r^-\Lambda^k$ family much as the $\P_r\Lambda^k$ family
complements the $\P_r^-\Lambda^k$ family.  To describe the new family
we require some notation.  A $k$-form monomial in $n$ variables is the product of
an ordinary monomial and a simple alternator:
$$
m=(x^1)^{\alpha_1}\cdots(x^n)^{\alpha_n}\, dx^{\sigma_1}\wedge\cdots\wedge dx^{\sigma_k}.
$$
where $\alpha$ is a multi-index and $\sigma\in\Sigma(k,n)$.  We define the degree of
$m$ to be the polynomial degree of its coefficient: $\deg m =\sum_i\alpha_i$.
The \emph{linear degree} of $m$ is more complicated:
$$
\ldeg m = \#\{\,i\,|\,\alpha_i=1,\ \alpha_i\notin\{\sigma_1,\ldots,\sigma_k\}\,\},
$$
that is, the number of variables that enter the coefficient linearly, not counting
the variables that enter the alternator.  For example,
if $m=x^1x^2 (x^3)^5\, dx^1$, then $\deg m=7$, $\ldeg m = 1$.

We now define the space of shape functions we shall use for $k$-forms on 
an $n$-dimensional box, $T$.  Viewing monomial forms as differential
forms on $T$, we define $\H_{r,l}\Lambda^k(T)\subset\H_r\Lambda^k(T)$ to be the span
of all monomial $k$-forms $m$ such that $\deg m=r$ and $\ldeg m\ge l$. Using this
definition and the Koszul differential, we then define
$$
\J_r\Lambda^k(T) = \sum_{l\ge 1}\kappa\H_{r+l-1,l}\Lambda^{k+1}(T)\subset\P_{r+n-k-1}\Lambda^k(T),
$$
Finally, we define the shape functions on $T$ by
$$
\S_r\Lambda^k(T) = \P_r\Lambda^k(T) + \J_r\Lambda^k(T) + d\J_{r+1}\Lambda^{k-1}(T),
$$
defined for all $r\ge 1$, $0\le k\le n$.

As the definition of the shape functions takes a while to absorb, we describe the spaces
in more elementary terms in the case of three dimensions.
\begin{itemize}
 \item
The space $\S_r\Lambda^0$, the polynomial shape functions
for the $H^1$ space, consists of all polynomials $u$ with \emph{superlinear degree} $\sdeg u\le r$.
The superlinear degree of a monomial is its degree ignoring any variable that enters to the first
power, and the superlinear degree of a polynomial is the maximum over its monomials.
The criterion $\sdeg u\le r$ was introduced in \cite{serendipity} to generalize the
serendipity elements from $2$ to $n$-dimensions.
\item The space $\S_r\Lambda^1$, the shape functions for the $H(\curl)$ space, consists
of vector fields of the form
$$
(v^1,v^2,v^3)  + (x^2x^3(w^2-w^3),x^3x^1(w^3-w^1),x^1x^2(w^1-w^2)) +\grad u,
$$
with polynomials $v^i$, $w^i$, and $u$ for which $\deg v^i\le r$, $\deg w^i\le r-1$, $\sdeg u\le r+1$, and
$w^i$ is independent of the variable $x^i$.
\item The $H(\div)$ space uses shape functions $\S_r\Lambda^2$, which are of the form
$$
(v^1,v^2,v^3)  + \curl (x^2x^3(w^2-w^3),x^3x^1(w^3-w^1),x^1x^2(w^1-w^2)),
$$
with $\deg v^i\le r$, $\deg w^i\le r$, and $w^i$ independent of the variable $x^i$.
\item Finally the $L^2$ space $\S_r\Lambda^3$ simply coincides with $\P_r$.

\end{itemize}

In \cite{cubicderham} we establish the following properties of these spaces (in any dimension):
\begin{itemize}
 \item degree property:\quad $\P_r\Lambda^k(I\,^n)\subset \S_r\Lambda^k(I\,^n)\subset  \P_{r+n-k}\Lambda^k(I^n)$,

\item inclusion property:\quad $\S_r\Lambda^k(I\,^n) \subset \S_{r+1}\Lambda^k(I\,^n)$,

\item trace property:\quad For each face $f$ of $I\,^n$,  $\tr_f \S_r\Lambda^k(I\,^n)\subset\S_r\Lambda^k(f)$,

\item subcomplex property:\quad $d\S_r\Lambda^k(I\,^n)\subset \S_{r-1}\Lambda^{k+1}(I\,^n)$.
\end{itemize}

The degrees of freedom for $\S_r\Lambda^k(T)$ are quite simple:
\begin{equation}\label{sdofs}
 u\in\S_r\Lambda^k(T)\mapsto \int_f (\tr_f u)\wedge q, \quad q\in\P_{r-2(d-k)}\Lambda^{d-k}(f), \ f\in\Delta_d(T), \ d\ge k.
\end{equation}
These are illustrated in Figure~\ref{fg:s}.
Notice that weighting function $q$ is sought in a $\P_s$ space, not a $\Q_s$ space.  Moreover, as the
face dimension $d$ increases by $1$, the degree $s$ of the space used for $q$ decreases by $2$.
A major result of \cite{cubicderham} is a proof that the degrees of freedom are unisolvent.
Further, we show there that the resulting finite element spaces combine into de~Rham subcomplexes
with commuting projections:
$$
\S_r\Lambda^0(\T_h)\xrightarrow{d}S_{r-1}\Lambda^1(\T_h)\xrightarrow{d}\cdots \xrightarrow{d}S_{r-n}\Lambda^n(\T_h),
$$
in which the degrees $r$ decrease, as for the $\P_r\Lambda^K(\T_h)$ spaces on simplices.
\begin{figure}[p]
\begin{center}
\begin{tabular}{cccc}
\hspace{-1.35in}$\S_r\Lambda^k$ (2D): & $k=0$ & $k=1$ & $k=2$
\\[.15in]
\raise.5in\hbox{$r=1$}
  & \includegraphics[width=1.2in]{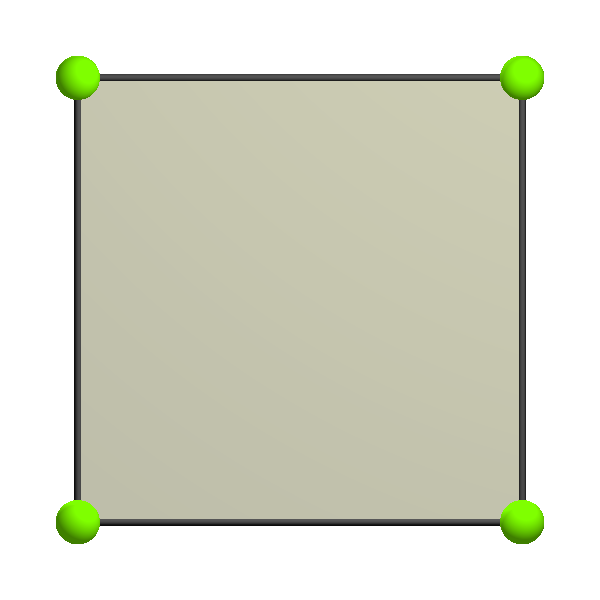}
  & \includegraphics[width=1.2in]{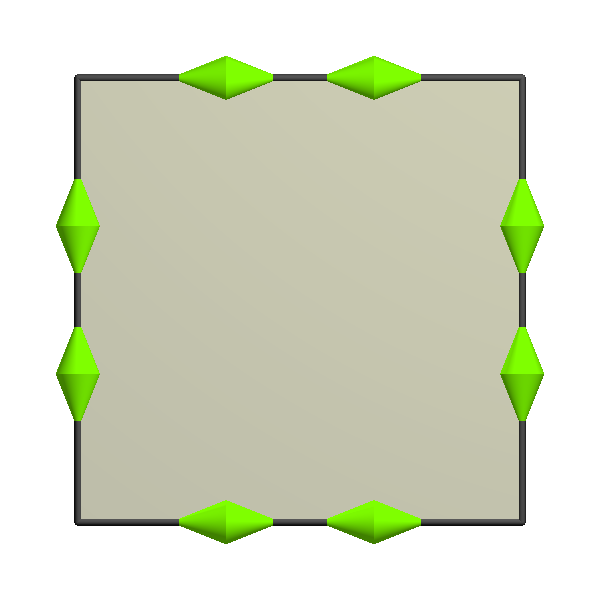}
  & \includegraphics[width=1.2in]{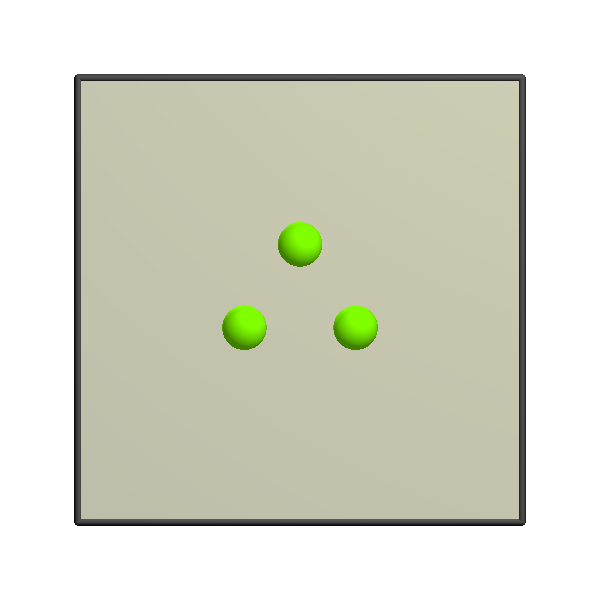}
\\
\raise.5in\hbox{$r=2$}
  & \includegraphics[width=1.2in]{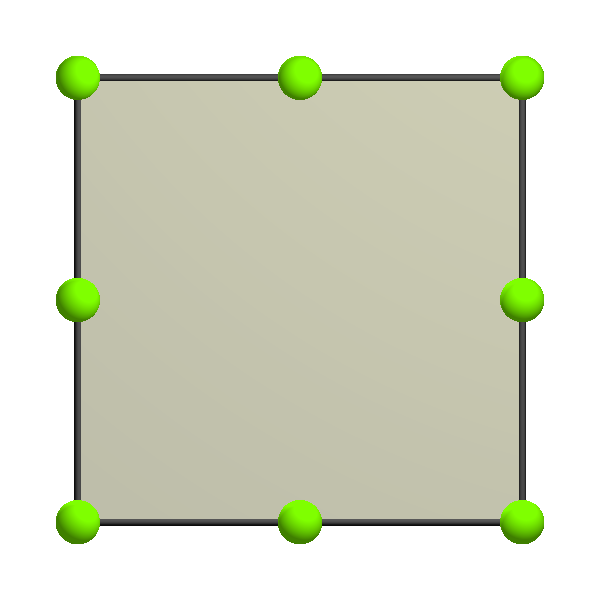}
  & \includegraphics[width=1.2in]{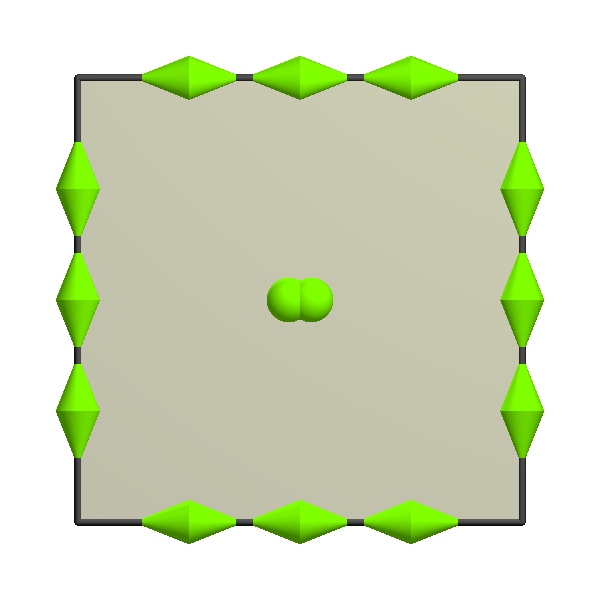}
  & \includegraphics[width=1.2in]{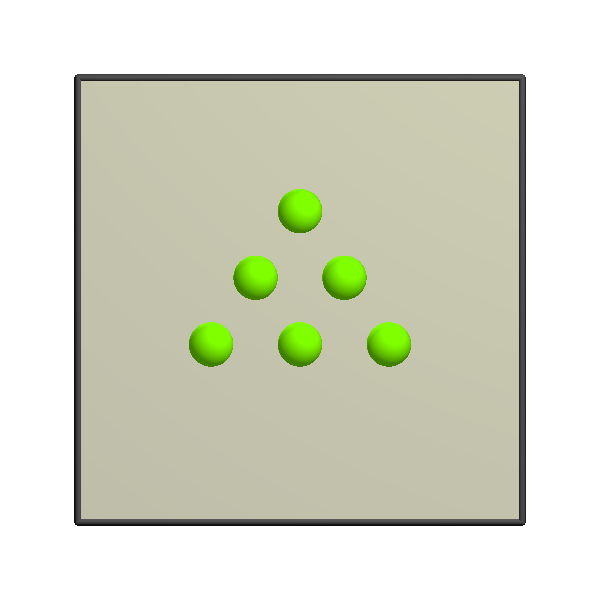}
\\
\raise.5in\hbox{$r=3$}
  & \includegraphics[width=1.2in]{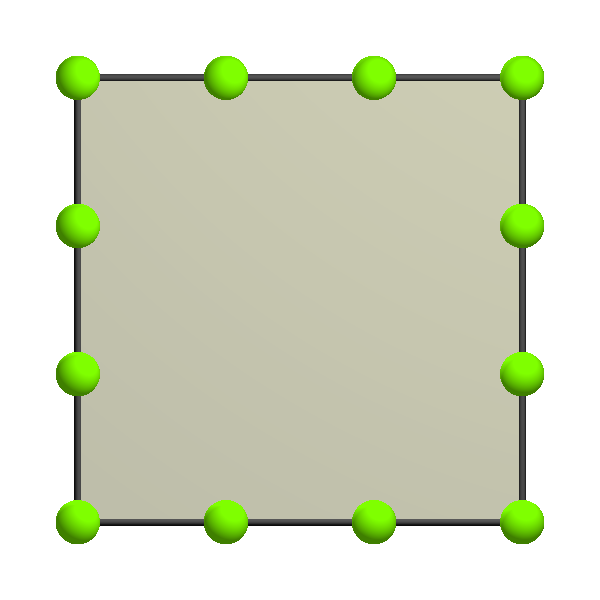}
  & \includegraphics[width=1.2in]{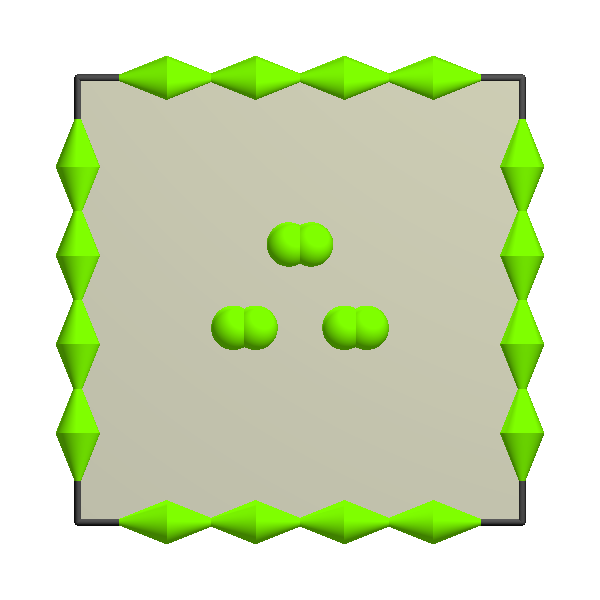}
  & \includegraphics[width=1.2in]{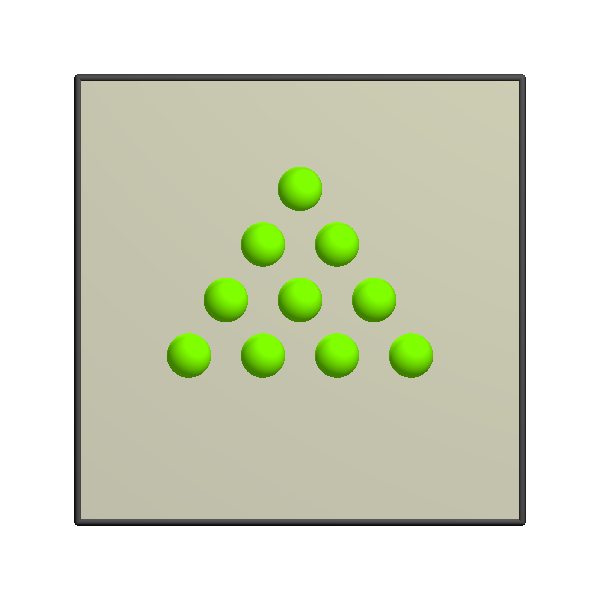}
 \end{tabular}
\begin{tabular}{ccccc}
$\S_r\Lambda^k$ (3D): & $k=0$ & $k=1$ & $k=2$ & $k=3$
\\[.15in]
\raise.5in\hbox{$r=1$}
  & \includegraphics[width=1.2in]{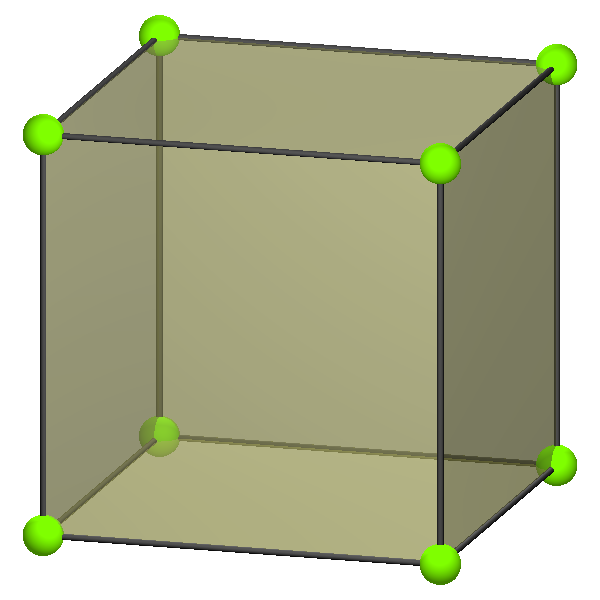}
  & \includegraphics[width=1.2in]{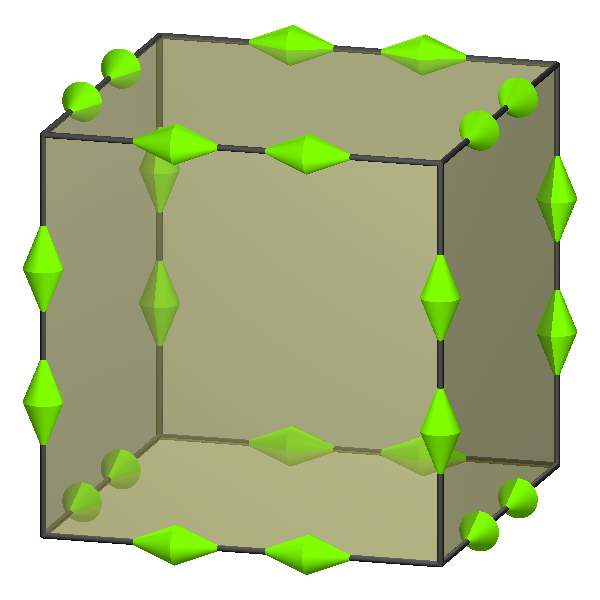}
  & \includegraphics[width=1.2in]{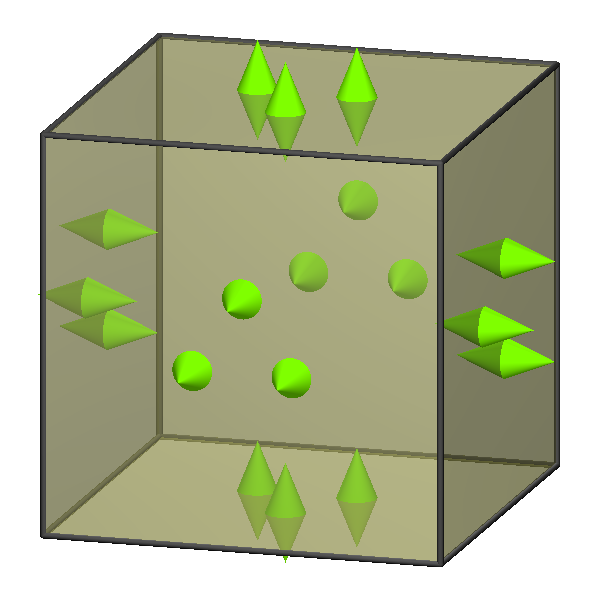}
  & \includegraphics[width=1.2in]{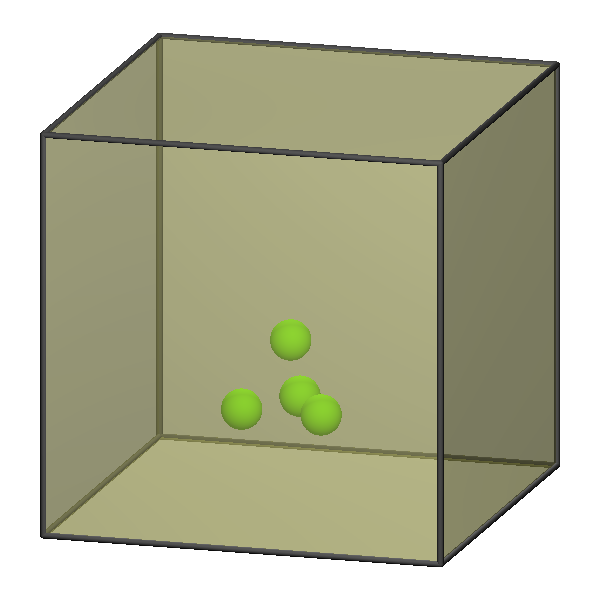}
\\
\raise.5in\hbox{$r=2$}
  & \includegraphics[width=1.2in]{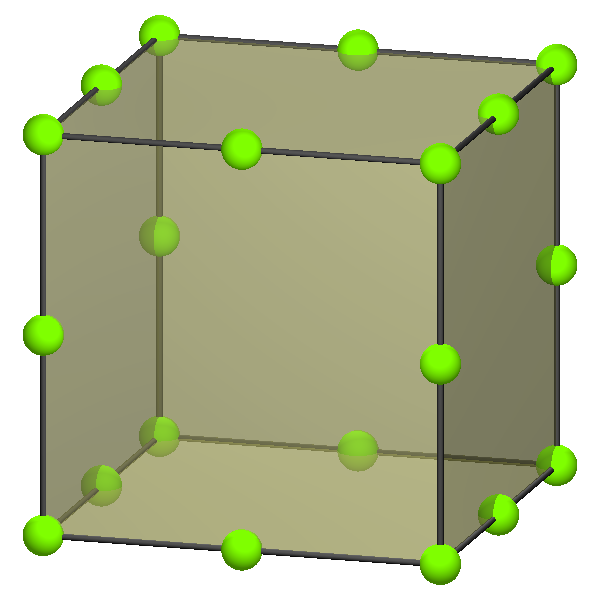}
  & \includegraphics[width=1.2in]{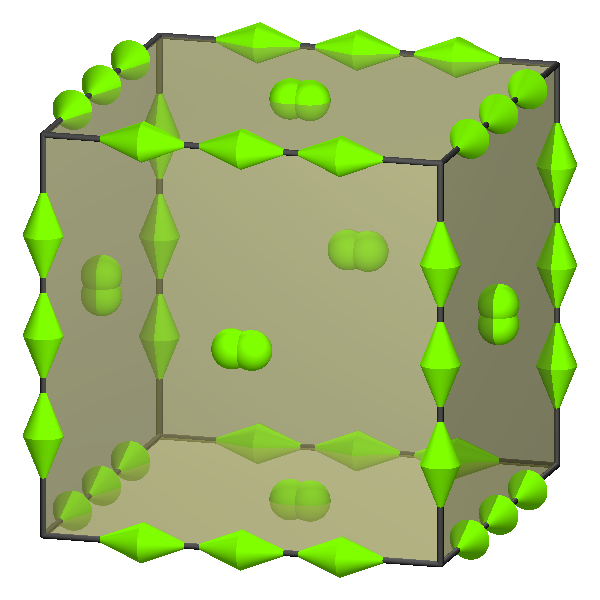}
  & \includegraphics[width=1.2in]{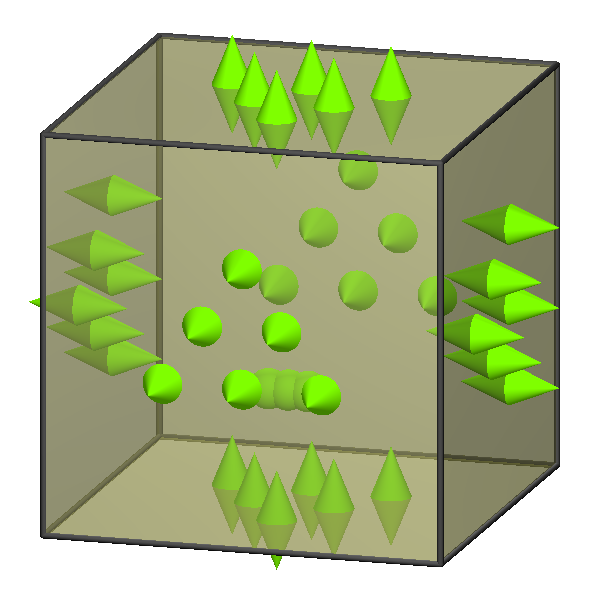}
  & \includegraphics[width=1.2in]{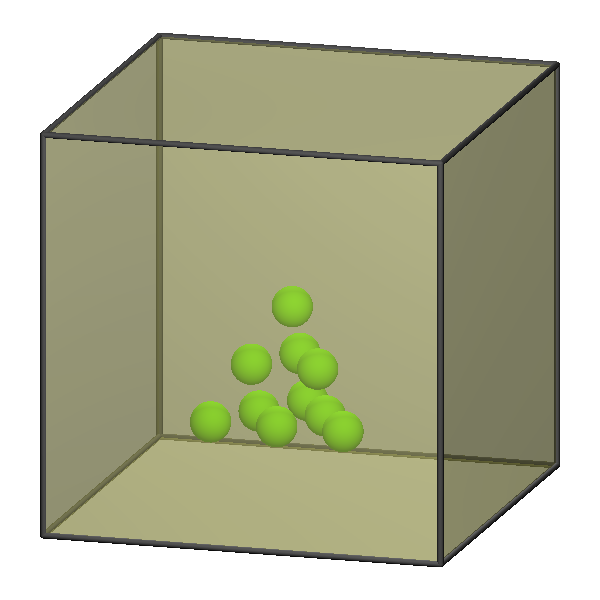}
\\
\raise.5in\hbox{$r=3$}
  & \includegraphics[width=1.2in]{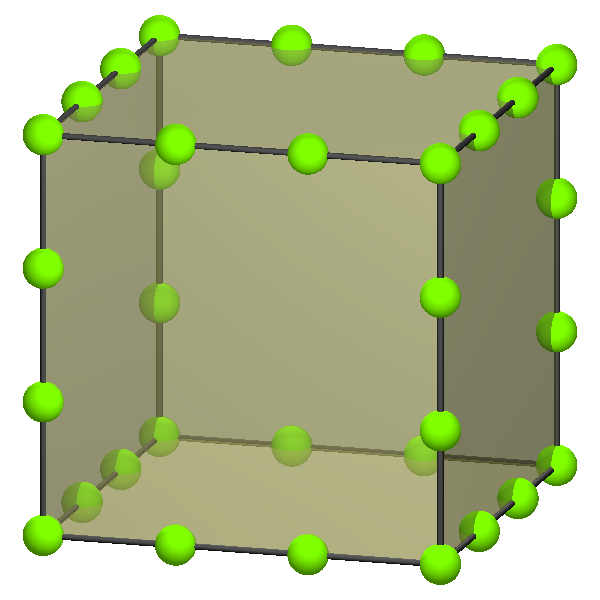}
  & \includegraphics[width=1.2in]{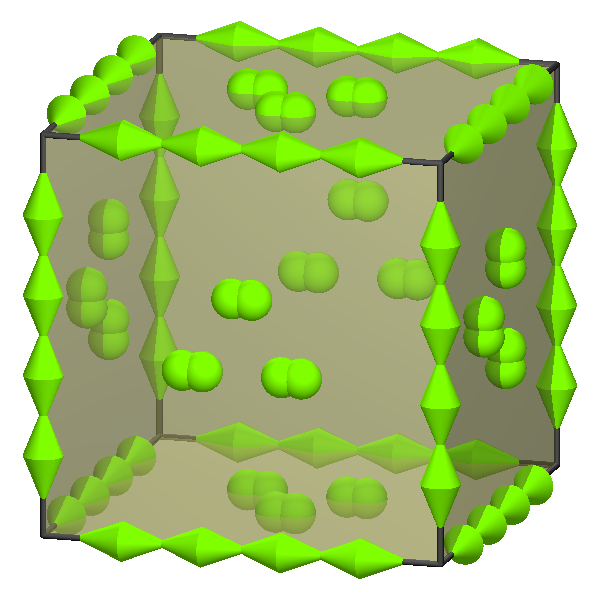}
  & \includegraphics[width=1.2in]{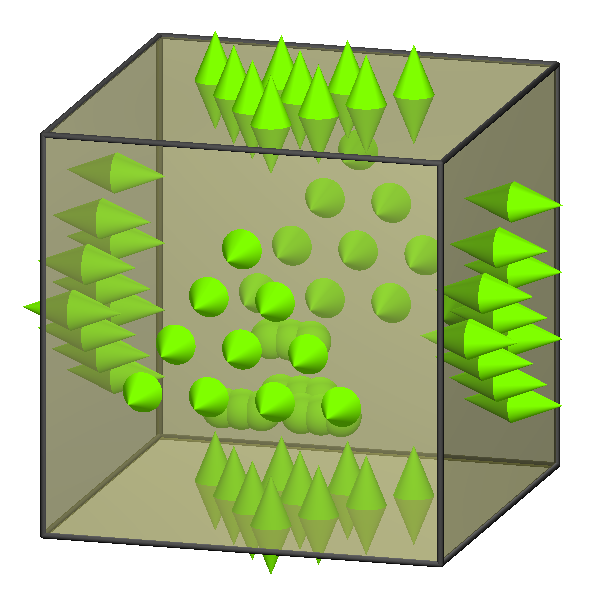}
  & \includegraphics[width=1.2in]{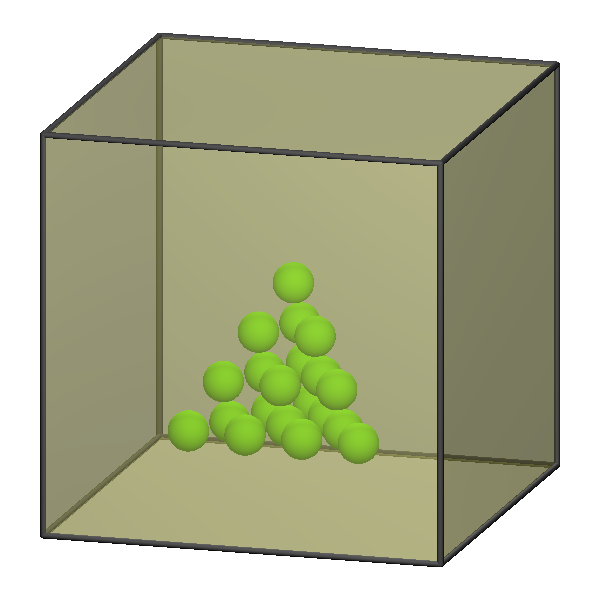}
 \end{tabular}
\end{center}
\caption{The $\S_r\Lambda^k(\T_h)$ spaces in two and three dimensions.}\label{fg:s}
\end{figure}

For $n$-forms, the space $\S_r\Lambda^n(\T_h)$ is simply the discontinuous $\P_r$ space
(but defined on boxes, rather than simplices).
In $2$-dimensions, the $0$-form space $\S_r\Lambda^0(\T_h)$ is the well-known serendipity space,
and the $1$-form space is the rectangular BDM space defined in \cite{Brezzi-Douglas-Marini}.
Hence these spaces were all known in $2$ dimensions.  However, in $3$ and more dimensions
they were not.
The $0$-form space is the appropriate generalization of the serendipity space
to higher dimensions, a space first defined in 2011 \cite{serendipity}.  The space
$\S_r\Lambda^2$ in 3-D is, we believe, the correct analogue of the BDM elements to
cubical meshes.  It has the same degrees of freedom as the space in \cite{Brezzi-Douglas-Duran-Fortin}
but the shape functions have better symmetry properties.  For $1$-forms in 3-D,
$\S_r\Lambda^1$ is a finite element discretization of $H(\curl)$.  To the best of our knowledge,
neither the degrees of freedom nor the shape functions for this space
had been proposed previously.  Finally, we note that the dimension of $\S_r^-\Lambda^k(T)$
tends to be much smaller than that of $\Q_r^-\Lambda^k(T)$, especially for $r$ large, as can
be observed in Table~\ref{tb:dims}.

\begin{table}[ht]
\centerline{%
\begin{tabular}{r||rrrrrrr}
 \multicolumn{7}{c}{$r$}\\
$k$ &  \quad 1 & \quad 2 & \quad 3 & \quad 4 & \quad 5 & \quad 6 \\
\hline
\multicolumn{7}{r}{\rlap{$n=1$}}\\
\hline
 0 &  2 & 3 & 4 & 5 & 6 & 7 \\
 1 &  1 & 2 & 3 & 4 & 5 & 6 \\
\hline
\multicolumn{7}{r}{\rlap{$n=2$}}\\
\hline
 0 &  4 & 9 & 16 & 25 & 36 & 49  \\
 1 &  4 & 12 & 24 & 40 & 60 & 84 \\
 2 &  1 & 4 & 9 & 16 & 25 & 36 \\
\hline
\multicolumn{7}{r}{\rlap{$n=3$}}\\
\hline
 0 &  8 & 27 & 64 & 125 & 216 & 343  \\
 1 &  12 & 54 & 144 & 300 & 540 & 882 \\
 2 &  6 & 36 & 108 & 240 & 450 & 756  \\
 3 &  1 & 8 & 27 & 64 & 125 & 216     \\
\hline
\multicolumn{7}{r}{\rlap{$n=4$}}\\
\hline
 0 &  16 & 81 & 256 & 625 & 1296 & 2401  \\
 1 &  32 & 216 & 768 & 2000 & 4320 & 8232 \\
 2 &  24 & 216 & 864 & 2400 & 5400 & 10584  \\
 3 &  8 & 96 & 432 & 1280 & 3000 & 6048  \\
 4 &  1 & 16 & 81 & 256 & 625 & 1296 
\\
\multicolumn{2}{r}{}
\end{tabular}\qquad
\begin{tabular}{rrrrrr}
 \multicolumn{6}{c}{$r$}\\
 1 & \quad 2 & \quad 3 & \quad 4 & \quad 5 & \quad 6 \\
\hline
\\
\hline
2 & 3 & 4 & 5 & 6 & 7 \\
 2 & 3 & 4 & 5 & 6 & 7 \\
\hline
\\
\hline
4 & 8 & 12 & 17 & 23 & 30  \\
8 & 14 & 22 & 32 & 44 & 58 \\
3 & 6 & 10 & 15 & 21 & 28 \\
\hline
\\
\hline
8 & 20  & 32 & 50 & 74 & 105 \\
24 & 48 & 84 & 135 & 204 & 294 \\
18 & 39 & 72 & 120 & 186 & 273 \\
4 & 10 & 20 & 35 & 56 & 84 \\
\hline
\\
\hline
16 & 48 & 80 & 136 & 216 & 328 \\
64 & 144 & 272 & 472 & 768 & 1188 \\
72 & 168 & 336 & 606 & 1014 & 1602 \\
32 & 84 & 180 & 340 & 588 & 952 \\
5 & 15 & 35 & 70 & 126 & 210
\\
\\
\end{tabular}}
\caption{Dimension of $\Q_r^-\Lambda^k(I^n)$ and $\S_r\Lambda^k(I^n)$.}\label{tb:dims}
\end{table}

\bibliographystyle{amsplainurl}
\bibliography{fe-families}

\end{document}